\documentclass[10pt]{article}%
\usepackage{a4}%
\usepackage[final]{optional}
\usepackage[centertags]{amsmath}%
\usepackage{eucal,dsfont,accents,bbm,amsfonts,amssymb,amsthm,amsopn,wasysym,paralist,latexsym,url}%
\usepackage[usenames]{color}
\definecolor{darkgreen}{rgb}{0,0.6,0}
\definecolor{darkred}{rgb}{0.8,0,0}
\usepackage[colorlinks, citecolor=darkgreen, linkcolor=darkred]{hyperref}
%
%
\title{On Type I Singularities in Ricci flow}
\author{Joerg Enders, Reto M\"uller and Peter M.\ Topping}
\date{}
\setlength{\parindent}{0pt}%
\setlength{\textwidth}{140mm}%
\setlength{\textheight}{210mm}%
\setlength{\topmargin}{-8mm}%
\setlength{\unitlength}{1mm}%

\theoremstyle{plain}
\newtheorem{lemma}{Lemma}[section]
\newtheorem{thm}[lemma]{Theorem}
\newtheorem{prop}[lemma]{Proposition}

\theoremstyle{definition}
\newtheorem{defn}[lemma]{Definition}

\newtheorem{rmk}[lemma]{Remark}

\numberwithin{equation}{section}

\newcommand{\m}{\ensuremath{{\cal M}}}
\newcommand{\ep}{\varepsilon}
\newcommand{\R}{\ensuremath{{\mathbb R}}}
\newcommand{\N}{\ensuremath{{\mathbb N}}}

\newcommand{\grad}{\nabla}
\newcommand{\del}{\partial}
\newcommand{\Rm}{{\mathrm{Rm}}}
\newcommand{\Ric}{{\mathrm{Ric}}}
\newcommand{\RS}{{\mathrm{R}}}

%

\begin{document}
\maketitle
\parskip=10pt

\begin{abstract}
We define several notions of singular set for Type I Ricci flows and
show that they all coincide. In order to do this, we prove that
blow-ups around singular points converge to \emph{nontrivial}
gradient shrinking solitons, thus extending work of Naber
\cite{Naber}. As a by-product we conclude that the volume of a
finite-volume singular set vanishes at the singular time.

We also define a notion of density for Type I Ricci flows and use it
to prove a regularity theorem reminiscent of White's partial
regularity result for mean curvature flow \cite{White05}.
\end{abstract}

\section{Introduction}
\label{intro}
A family $(\m^n,g(t))$ of smooth complete Riemannian $n$-manifolds
satisfying Hamilton's Ricci flow \cite{ham3D},
\begin{equation}\label{RicciFlow}
\frac{\del}{\del t} g = -2 \Ric_{g(t)},
\end{equation}
on a finite time interval $[0,T)$, $T<\infty$, is called a
\textbf{Type I Ricci flow} if there exists a constant $C>0$ such
that for all $t\in[0,T)$
\begin{equation}\label{typeIeq}
\sup_{\m}|\Rm_{g(t)}|_{g(t)}\le\frac{C}{T-t}.
\end{equation}
Such a solution is said to develop a \textbf{Type I singularity} at
time $T$ (and $T$ is called a \textbf{Type I singular time}) if it
cannot be smoothly extended past time $T$. It is well known that
this is the case if and only if
\begin{equation}\label{Tsingular}
\limsup_{t\nearrow T}\sup_{\m}|\Rm_{g(t)}|_{g(t)}=\infty,
\end{equation}
see \cite{ham3D} for compact and \cite{shi} for complete flows. Here
$\Rm_{g(t)}$ denotes the Riemannian curvature tensor of the metric
$g(t)$. The main examples of Ricci flow singularities are of Type I,
in particular the important neck-pinch singularity modelled on a
shrinking $n$-dimensional cylinder (cf. \cite{AngenentKnopf1,
AngenentKnopf2}) and singularities modelled on flows starting at a
positive Einstein metric or more general at a gradient shrinking
soliton with bounded curvature (see Section \ref{blowup}). Only very
few rigorous examples of finite time singularities which are not of
Type I (i.e. Type II) are known (cf. \cite{DaskalopoulosHamilton,
GuZhu}).

Since the manifolds $(\m,g(t))$ have bounded curvatures in the Type
I case (\ref{typeIeq}), the parabolic maximum principle applied to
the evolution equation satisfied by $|\Rm|^2$ shows that
(\ref{Tsingular}) is equivalent to
\begin{equation}\label{typeIlowerbound}
\sup_{\m}|\Rm_{g(t)}|_{g(t)}\ge\frac{1}{8(T-t)} \quad \text{ for all
} t\in[0,T).
\end{equation}
This motivates the following definitions.
\begin{defn}
A quantity $A(t)$ is said to \textbf{blow up at the Type I rate} as
$t\to T$ if there exist constants $C\geq c >0$ such that
$\frac{c}{T-t}\leq A(t)\leq \frac{C}{T-t}$ for all $t\in[T-c,T)$.
\end{defn}
\begin{defn}\label{TIsingpoint}
A space-time sequence $(p_i,t_i)$ with $p_i\in\m$ and $t_i\nearrow
T$ in a Ricci flow is called an \textbf{essential blow-up sequence}
if there exists a constant $c>0$ such that
\begin{equation*}
|\Rm_{g(t_i)}|_{g(t_i)}(p_i)\ge\frac{c}{T-t_i}.
\end{equation*}
A point $p\in\m$ in a Type I Ricci flow is called a (general)
\textbf{Type I singular point} if there exists an essential blow-up
sequence with $p_i\to p$ on $\m$. We denote the set of all Type I
singular points by $\Sigma_I$.
\end{defn}
\begin{rmk}
If a solution to (\ref{RicciFlow}) develops a Type I singularity at
time $T$, the existence of an essential blow-up sequence follows
from (\ref{typeIlowerbound}). If in addition $\m$ is compact, Type I
singular points always exist. In the noncompact case the Type I
singular set $\Sigma_I$ may be empty if the singularity \emph{forms
at spatial infinity}. An example where this happens could be a
cylinder $S^{n-1}\times \R$ with radius larger than 1 in the center
and tapering down to 1 at the ends. Flowing this under Ricci
flow would lead to a first blow-up at spatial infinity.
\end{rmk}
A conjecture, normally attributed to Hamilton, is that a suitable
blow-up sequence for a Type I singularity converges to a nontrivial
gradient shrinking soliton \cite{formations}. In the case where the
blow-up limit is compact, this conjecture was confirmed by Sesum
\cite{Sesum}. In the general case, blow-up to a gradient shrinking
soliton was proved by Naber \cite{Naber}. However, it remained an
open question whether the limit soliton Naber constructed is
nontrivial, as mentioned for example in \cite[Section 3.2]{Cao}. In
particular, one might think that the limit could be flat if all
essential blow-up sequences converged ``slowly'' to $p$ so that the
curvature disappears at infinity after parabolically rescaling. One
of the goals of this article is to rule out this possibility.
More precisely, we prove the following theorem.
\begin{thm}\label{mainthm1}
Let $(\m^n,g(t))$ be a Type I Ricci flow on $[0,T)$ and suppose
$p\in\Sigma_I$ is a Type I singular point as in Definition
\ref{TIsingpoint}. Then for every sequence $\lambda_j\to\infty$, the
rescaled Ricci flows $(\m,g_j(t),p)$ defined on $[-\lambda_jT,0)$ by
$g_j(t):=\lambda_j g(T+\frac{t}{\lambda_j})$ subconverge to a
normalized nontrivial gradient shrinking soliton in canonical form.
\end{thm}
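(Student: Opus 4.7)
The plan is to combine Naber's construction of a soliton limit with Perelman's pseudolocality theorem to rule out the trivial (Gaussian) limit.

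\textbf{Soliton limit (Naber's framework).} The Type I bound \eqref{typeIeq} rescales to $|\Rm_{g_j}|_{g_j}(\cdot,\tau)\le C/|\tau|$ on $[-\lambda_j T,0)$, so the rescaled flows enjoy locally uniform curvature bounds on compact subsets of $(-\infty,0)$. Combined with Perelman's no-local-collapsing, Hamilton's compactness theorem yields smooth subsequential Cheeger--Gromov convergence to a complete pointed Ricci flow $(\m_\infty,g_\infty(\tau),p_\infty)$ on $(-\infty,0)$. Perelman's reduced volume monotonicity, evaluated along base points approaching $(p,T)$, possesses a well-defined limit $\Theta_p\in(0,1]$ and forces the limit to be self-similar, namely a gradient shrinking soliton in canonical form. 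This is Naber's theorem; it addresses everything except the nontriviality claim.

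\textbf{Reduction to pseudolocality.} For nontriviality, suppose for contradiction that $g_\infty$ is the Gaussian shrinker on $\R^n$. By smooth convergence, for every $R>0$ and $\delta>0$ and for $j$ sufficiently large, $g_j(-1)$ is $\delta$-close to Euclidean on $B_{g_j(-1)}(p,R)$, with small curvature and almost-Euclidean volume ratios. Perelman's pseudolocality theorem applied to $g_j$ at $\tau=-1$ propagates this almost-Euclidean data forward to a curvature bound $|\Rm_{g_j(\tau)}|\le\eta(\delta)$ on a parabolic neighborhood of $(p,0)$, with $\eta(\delta)\to0$ as $\delta\to0$; for $R$ chosen large enough, the neighborhood extends all the way up to $\tau=0^-$. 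Rescaling back to the original flow, we obtain $|\Rm_{g(t)}|(T-t)\to 0$ uniformly on a parabolic neighborhood of $(p,T)$ at the natural scale $\sqrt{T-t}$.

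\textbf{Locating an essential blow-up (main obstacle).} To obtain a contradiction with $p\in\Sigma_I$, I must place an essential blow-up point into this parabolic neighborhood. This is the main difficulty: the hypothesis only delivers $(p_i,t_i)$ with $p_i\to p$ in the manifold topology, and a priori gives no control on the parabolically rescaled distance $d_{g(t_i)}(p_i,p)/\sqrt{T-t_i}$. I plan to resolve this with a Hamilton-style point-picking argument: starting from the given essential blow-up sequence and using Perelman's distance-distortion estimate under the Type I Ricci bound (which integrates to a change of at most $O(\sqrt{T-t_0})$ over the interval $[t_0,T)$), one should be able to replace it by an essential blow-up sequence satisfying $d_{g(t_i)}(p_i,p)\le C\sqrt{T-t_i}$. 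Once such a sequence sits at parabolic scale from $p$, the bound of the previous paragraph forces $|\Rm|(p_i,t_i)(T-t_i)\to 0$, a direct contradiction. Finally, nontriviality for every sequence $\lambda_j\to\infty$ follows because the asymptotic reduced volume $\Theta_p$ is an invariant of $(p,T)$, and Gaussian rigidity for gradient shrinking solitons identifies the Gaussian limit with the case $\Theta_p=1$.
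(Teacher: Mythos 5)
Your overall two-step structure (Naber's soliton limit, then pseudolocality to rule out flatness) matches the paper. However, your "main obstacle" paragraph misdiagnoses the difficulty and introduces an unnecessary and problematic detour that the paper's actual argument sidesteps entirely.

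The paper does \emph{not} need to upgrade the essential blow-up sequence $(p_i,t_i)$ to one lying at parabolic distance $O(\sqrt{T-t_i})$ from $p$. The key point you miss is the order of quantifiers: one applies pseudolocality to a single, \emph{fixed} rescaled flow $g_{j}$ (with $j$ chosen large but then frozen), obtaining the bound
\[
|\Rm_{g_{j}(t)}|_{g_{j}(t)}(x)\leq (\ep r_0)^{-2},\qquad -(\ep r_0)^2\leq t<0,\quad x\in B_{g_{j}(-(\ep r_0)^2)}(p,\ep r_0),
\]
exactly as in \eqref{plestim2}. For this \emph{fixed} $j$, the ball $B_{g_{j}(-(\ep r_0)^2)}(p,\ep r_0)$ is a fixed open neighbourhood of $p$ in the manifold topology, and the above is a fixed, finite curvature bound there for all $g$-times in $[T-(\ep r_0)^2/\lambda_{j},T)$. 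Now let $i\to\infty$: since $p_i\to p$ in $\m$ and $t_i\to T$, eventually $p_i$ lies in that fixed neighbourhood and $t_i$ lies in that time range, yet $|\Rm_{g_{j}(\lambda_{j}(t_i-T))}|(p_i)\geq c/(\lambda_{j}(T-t_i))\to\infty$. That is already the contradiction. No point-picking or parabolic-scale location of essential blow-ups is required.

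Two further issues with your write-up. First, in the "Reduction to pseudolocality" paragraph you try to extract a curvature bound $\eta(\delta)\to0$ as $\delta\to0$; pseudolocality does not deliver a small bound as the almost-Euclidean defect $\delta$ shrinks, it delivers the bound $(\ep r_0)^{-2}$ fixed by the chosen scale $r_0$. That is fine — the paper's contradiction does not need a bound tending to $0$, only a finite bound against an unbounded blow-up. Second, the point-picking you outline is not justified by the distance-distortion estimate you cite: that estimate bounds how fast distances can \emph{shrink} under a Ricci upper bound (and only by $O(\sqrt{T-t_0})$), so it does not produce an essential blow-up sequence within parabolic distance of $p$; there is also no reason such a sequence need exist (an essential blow-up accumulating at $p$ in the manifold topology could a priori sit at arbitrarily many parabolic radii from $p$, which is compatible with nontriviality of the blow-up limit once you fix $j$ first). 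The correct argument avoids this claim entirely, so the gap is in your reduction, not in the theorem.
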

We now turn to the relationship between the set $\Sigma_I$ of Type I
singular points and other notions of singular sets, starting with
the set of \emph{special} Type I singular points $\Sigma_s$ defined
as follows.
\begin{defn}\label{specialsingpoint}
A point $p\in\m$ in a Type I Ricci flow is called a \textbf{special
Type I singular point} if there exists an essential blow-up sequence
$(p_i,t_i)$ with $p_i = p$ for all $i\in\N$. The set of all such
points is denoted by $\Sigma_s$. Moreover, we denote by
$\Sigma_{\Rm} \subseteq \Sigma_s$ the set of points $p\in\m$ for
which $|\Rm_{g(t)}|_{g(t)}(p)$ blows up at the Type I rate as
$t\rightarrow T$.
\end{defn}
For mean curvature flow, Le-Sesum \cite{Hbounded} proved that the
mean curvature (rather than the second fundamental form) must be
unbounded at a Type I singular time. It is not surprising and known
to some Ricci flow experts that a similar result is true for the
Ricci flow: if $T$ is a Type I singular time, then the \emph{scalar}
curvature $R_{g(t)}$ is unbounded as $t\to T$. We make the following
definition.
\begin{defn}\label{Rsingpoint}
The set $\Sigma_R$ is defined to be the set of points $p\in\m$ for
which $R_{g(t)}(p)$ blows up at the Type I rate as $t\rightarrow T.$
\end{defn}
Instead of defining more restrictive singular sets, one can also
think of a priori larger sets of singular points, for example the
set consisting of points $p\in\m$ where $|\Rm_{g(t)}|_{g(t)}(p)$ is
unbounded as $t\to T$ but possibly blows up at a rate smaller than
the Type I rate, e.g.\ like $\frac{1}{(T-t)^\alpha}$ for some
$\alpha<1$. A priori it is not clear whether (in the presence of a
Type I singularity) such slowly forming singularities may exist in
another part of the manifold, in particular since they cannot be
observed by a blow-up argument analogous to Theorem \ref{mainthm1}.
The following is the most general, natural definition of the
singular set.
\begin{defn}\label{singpoint}
We call $p\in\m$ a \textbf{singular point} if there does not exist
any neighbourhood $U_p \ni p$ on which $|\Rm_{g(t)}|_{g(t)}$ stays
bounded as $t\to T$. The set of all singular points in this sense is
denoted by $\Sigma$.
\end{defn}
From the above definitions it is clear that
\begin{equation}\label{nestedsets}
\Sigma_R \subseteq \Sigma_{\Rm} \subseteq \Sigma_s \subseteq
\Sigma_I \subseteq \Sigma.
\end{equation}
For mean curvature flow with $H>0$, Stone \cite{Stone} showed that the
(corresponding) notions of singular sets $\Sigma_s$, $\Sigma_I$ and
$\Sigma$ agree. The same is true for the Ricci flow; in fact, we
show the slightly stronger result that all the singular sets defined
above are identical.
\begin{thm}\label{mainthm2}
Let $(\m^n,g(t))$ be a Type I Ricci flow on $[0,T)$ with singular
time $T$. Then $\Sigma \subseteq \Sigma_R$, i.e.\ all the different
notions of nested singular sets in (\ref{nestedsets}) agree.
\end{thm}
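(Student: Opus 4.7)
The plan is to establish the missing inclusion $\Sigma\subseteq\Sigma_R$; combined with \eqref{nestedsets} this gives equality of all five singular sets. I would argue by contradiction: assume $p\in\Sigma\setminus\Sigma_R$ and aim to derive a uniform bound on $|\Rm_g|$ in a parabolic neighbourhood of $(p,T)$, contradicting $p\in\Sigma$.

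Since the Type I hypothesis yields the upper bound $R(p,t)\le C/(T-t)$ for free, the assumption $p\notin\Sigma_R$ amounts to $\liminf_{t\to T}(T-t)R(p,t)=0$. I select a sequence $t_j\nearrow T$ with $(T-t_j)R(p,t_j)\to 0$ and set $\lambda_j:=1/(T-t_j)$. The blow-up procedure underlying Theorem \ref{mainthm1}, in the form due to Naber, applies at \emph{any} point of a Type I flow and produces subsequential convergence of the rescaled flows $(\m,g_j,p)$ to a complete gradient shrinking soliton $(\m_\infty,g_\infty,p_\infty)$ on $(-\infty,0)$; the added content of Theorem \ref{mainthm1} is the nontriviality of this limit under $p\in\Sigma_I$. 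The scaling identity $R_{g_j}(p,-1)=(T-t_j)R(p,t_j)$ forces $R_{g_\infty}(p_\infty,-1)=0$ in the limit. Chen's theorem gives $R_{g_\infty}\ge 0$ on a complete gradient shrinking soliton, and the parabolic inequality $\partial_t R\ge\Delta R$ combined with the strong maximum principle propagates this zero, so $R_{g_\infty}\equiv 0$ on the slice $\{t=-1\}$. Re-reading the evolution equation $\partial_t R=\Delta R+2|\Ric|^2$ at $t=-1$ then forces $|\Ric_{g_\infty}|\equiv 0$ there, so the soliton is Ricci-flat and hence isometric to the Gaussian soliton on flat $\R^n$.

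Having identified the blow-up limit as flat, I would invoke Perelman's pseudolocality theorem to convert this into boundedness of $|\Rm_g|$ near $(p,T)$. The smooth convergence $g_j(-1)\to g_{\mathrm{eucl}}$ on compact subsets implies that $\lambda_j g(t_j)$ is $\delta_j$-close to Euclidean on balls of radius $R_j\to\infty$ with $\delta_j\to 0$, supplying in particular the near-Euclidean volume/isoperimetric hypothesis of pseudolocality. Applied at time $t_j$ with scale $r_0:=R_j\sqrt{T-t_j}$, pseudolocality yields dimensional constants $\epsilon,\alpha>0$ and a bound of the form
\[
 |\Rm_g|(q,t)\le \frac{\alpha}{t-t_j}+\frac{1}{(\epsilon r_0)^2}\qquad \text{on }B_g(p,\epsilon r_0)\times(t_j,t_j+(\epsilon r_0)^2].
\]
Choosing $R_j$ to grow sufficiently rapidly that $r_0\to\infty$, the interval of validity eventually covers $[t_j,T)$; this together with the automatic Type I bound on $[0,t_j]$ and control of distance distortion (also via Type I) produces the desired uniform bound on $|\Rm_g|$ on a genuine parabolic neighbourhood of $(p,T)$.

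I expect the main obstacle to lie in this final pseudolocality step. Perelman's theorem requires not merely $C^\infty$-closeness to Euclidean on a single time slice but also a volume/isoperimetric near-Euclideanness, so one must verify carefully that the soliton convergence delivers this data; and one must balance the spatial and temporal scales during un-rescaling to ensure that the curvature bound holds on a \emph{fixed-size} parabolic neighbourhood of $(p,T)$ rather than on a shrinking family of such neighbourhoods indexed by $j$.
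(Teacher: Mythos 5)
Your proposal is correct and follows essentially the same route as the paper: use $p\notin\Sigma_R$ to select a blow-up sequence with $R_{g_j}(p,-1)\to 0$, identify the limit shrinker as the Gaussian soliton by a rigidity argument, and then convert flatness of the blow-up into a local uniform curvature bound via pseudolocality, contradicting $p\in\Sigma$.

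The small deviations are organisational rather than substantive. The paper factors the argument through the intermediate set $\Sigma_I$: it first proves $\Sigma_I=\Sigma_R$ (Theorem \ref{thmRblowup}, using the nontriviality statement of Theorem \ref{mainthm1} as a black box) and then $\Sigma\subseteq\Sigma_I$ (Theorem \ref{regthm}, the pseudolocality step); you collapse this into one pass. For rigidity, the paper cites Pigola--Rimoldi--Setti (Lemma \ref{rigidity}), while you rederive the same conclusion from $R\ge 0$ (Chen/Zhang), the strong maximum principle for $\partial_t R\ge\Delta R$, and the evolution equation $\partial_t R=\Delta R+2|\Ric|^2$; these are equivalent and both are fine. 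Finally, the paper invokes Perelman's Theorem~10.3 version of pseudolocality (stated here as Proposition~\ref{pseudolocality}), which delivers a clean $|\Rm|\le(\ep r_0)^{-2}$ bound after Remark~\ref{pseudoimprove}/Lemma~\ref{distlemma}; you quote the Theorem~10.1 form with the extra $\alpha/(t-t_j)$ term, which then needs to be patched near $t=t_j$ using the Type~I bound $|\Rm|\le C/(T-t)\le C/(T-t_j)$ — a routine extra step, and you correctly flag it. One minor point: you do not need $r_0\to\infty$; for a fixed $j_0$ it suffices that $R_{j_0}\ge 1/\ep$ so that $(\ep r_0)^2\ge T-t_{j_0}$, giving a fixed-size spatial ball and a time interval covering $[t_{j_0},T)$.
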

In particular, this shows that for a Type I Ricci flow there cannot
exist singular points where $R_{g(t)}$ stays bounded or blows up at
a rate smaller than the Type I rate as $t\rightarrow T$. As a
corollary, we conclude that the singular set $\Sigma$ has
asymptotically vanishing volume if its volume is bounded initially.
\begin{thm}\label{mainthm3}
Let $(\m^n,g(t))$ be a Type I Ricci flow on $[0,T)$ with singular
time $T$ and singular set $\Sigma$ as in Definition \ref{singpoint}.
If $\mathrm{Vol}_{g(0)}(\Sigma)<\infty$ then
\begin{equation*}
\mathrm{Vol}_{g(t)}(\Sigma)\xrightarrow{t\rightarrow T} 0.
\end{equation*}
\end{thm}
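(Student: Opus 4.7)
The plan is to write $\mathrm{Vol}_{g(t)}(\Sigma)$ as an integral over the \emph{fixed} set $\Sigma$ against the \emph{fixed} initial measure $d\mu_{g(0)}$, and then pass to the limit $t\to T$ via dominated convergence. Since the flow is smooth on $\m\times[0,T)$, the pointwise ODE $\partial_t d\mu_{g(t)} = -R_{g(t)} d\mu_{g(t)}$ integrates to
\begin{equation*}
d\mu_{g(t)}(p) = \exp\!\left(-\int_0^t R_{g(s)}(p)\, ds\right) d\mu_{g(0)}(p),
\end{equation*}
so the problem reduces to controlling the density $\rho_t(p) := \exp(-\int_0^t R_{g(s)}(p)\, ds)$ on $\Sigma$.

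For pointwise decay of $\rho_t$ on $\Sigma$, I would invoke Theorem~\ref{mainthm2}, which identifies $\Sigma$ with $\Sigma_R$: for every $p\in\Sigma$ there exists $c_p>0$ with $R_{g(t)}(p)\ge c_p/(T-t)$ for $t$ near $T$, forcing $\int_0^T R_{g(s)}(p)\, ds = +\infty$ and hence $\rho_t(p)\to 0$.

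For the uniform integrable dominator, I would use the standard fact that scalar curvature stays bounded below along the flow. Applying the parabolic maximum principle to the evolution inequality $\partial_t R \ge \Delta R + \tfrac{2}{n} R^2$ yields $R_{g(t)}\ge -R_0$ on $\m\times[0,T)$ for a constant $R_0$ depending only on $T$ and $\inf_\m R_{g(0)}$, so $\rho_t \le e^{R_0 T}$ uniformly. Since $\Sigma$ is closed (its complement being open by Definition~\ref{singpoint}) and $\mathrm{Vol}_{g(0)}(\Sigma)<\infty$, this constant is an integrable dominator on $(\Sigma, d\mu_{g(0)})$, and dominated convergence delivers $\mathrm{Vol}_{g(t)}(\Sigma)\to 0$. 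The argument is essentially routine once Theorem~\ref{mainthm2} is available; the only mild technicality is justifying the maximum principle for $R$ in the noncompact case, but this is standard under the Type~I bound, which gives the required completeness and curvature-growth control. The real content has been absorbed into the proof that $\Sigma = \Sigma_R$.
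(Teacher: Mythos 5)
Your proof is correct, and it is a slightly cleaner route than the one in the paper, though both rest on the same two ingredients: the uniform lower bound on $R$ (hence an upper bound on the volume ratio $\rho_t$), and the identification $\Sigma=\Sigma_R$ from Theorem~\ref{mainthm2} (hence $\int_0^t R_{g(s)}(p)\,ds\to\infty$ pointwise on $\Sigma$). Where the paper differs is in how it handles the non-uniformity of the blow-up constant $c_p$ across $\Sigma$: it stratifies $\Sigma_R$ into nested sets $\Sigma_{R,k}=\{p:R_{g(t)}(p)\ge\frac{1/k}{T-t} \text{ for } t\in(T-\tfrac1k,T)\}$, derives on each stratum the explicit bound $\rho_t\le 2e^{\tilde C T}(T-t)^{1/k}$, and then sums over $k$ (this final step is itself a dominated-convergence argument for series). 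You avoid the stratification entirely by passing directly to dominated convergence for the integral over $(\Sigma,d\mu_{g(0)})$, letting the pointwise decay $\rho_t(p)\to 0$ do the work without needing a quantified rate. The paper's version buys you an explicit rate of decay on each stratum $\Sigma_{R,k}$, which is extra information; yours is shorter and more transparent. Both are valid, and you are right that the real content is in Theorem~\ref{mainthm2}; the rest is bookkeeping once $\Sigma=\Sigma_R$ and $\inf_\m R_{g(t)}\ge-\tilde C$ are in hand.
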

\begin{rmk}
A shrinking cylinder $\mathbb{S}^m\times \mathbb{R}^{n-m}$, $n> m\ge
2$, shows that the condition $\mathrm{Vol}_{g(0)}(\Sigma)<\infty$ is
necessary.
\end{rmk}
The paper is organized as follows. In Section \ref{blowup}, we prove
Theorem \ref{mainthm1}. The methods we are using strongly rely on
Perelman's results \cite{P1}. First, we recall Naber's result
\cite{Naber} that for \emph{any} point $p\in\m$ the rescaled flows
$g_j(t)$ as defined in Theorem \ref{mainthm1} converge to a gradient
shrinking soliton (Theorem \ref{limsolthm}). This is based on a
version of Perelman's reduced length and volume based at the
singular time, developed independently by the first author
\cite{Enders_Thesis} and Naber \cite{Naber}. For completeness, we
sketch the main arguments of the proof. We then use Perelman's
pseudolocality theorem \cite{P1} to show that the limit soliton must
be nontrivial if $p\in\Sigma_I$ is a Type I singular point. This
completes the proof of Theorem \ref{mainthm1}. In Section
\ref{singsize}, we prove Theorem \ref{mainthm2}. The argument is
based again on Perelman's pseudolocality result as well as a strong
rigidity result for gradient shrinking solitons, which can be found in
Pigola-Rimoldi-Setti \cite{PRS09}. As a corollary, we obtain a proof
of Theorem \ref{mainthm3}. Finally, in the last section we define a
\textbf{density function} $\theta_{p,T}$ for Type I Ricci flows,
related to the central density for gradient shrinking solitons
defined by Cao-Hamilton-Ilmanen \cite{CaoHamiltonIlmanen}, and prove
a regularity type theorem (Theorem \ref{gapthm}) resembling White's
regularity result for mean curvature flow \cite{White05} and Ni's
regularity theorem for Ricci flow \cite{Ni}. The
proof of this result uses a gap theorem of Yokota \cite{Yokota}.

Le and Sesum have also been studying properties of the scalar
curvature at a Ricci flow singularity. In the current version of their paper
\cite{LeSesum}, they observe how the arguments from our paper in fact exclude Type I
singularity formation for compact manifolds under the assumption of an
integral (rather than pointwise) scalar curvature bound.
\paragraph{Acknowledgements:} All authors were partially
supported by The Leverhulme Trust. RM was partially supported by
FIRB Ideas ``Analysis and Beyond''.
\section{Blow-up to nontrivial gradient shrinking solitons}
\label{blowup}
Before we start proving Theorem \ref{mainthm1}, let us briefly
recall some basic definitions and facts about gradient shrinking
solitons as well as the essential definitions and results from the
first author \cite{Enders_Thesis} and Naber \cite{Naber}.
\begin{defn}
A triple $(\m^n,g,f),$ where $(\m,g)$ is a complete $n$-dimensional
Riemannian manifold and $f: \m\rightarrow \R$ a smooth function, is
called \textbf{gradient shrinking soliton} if
\begin{equation*}
\Ric_g + \grad^g\grad f =\frac{1}{2}g.
\end{equation*}
\end{defn}
It is well known, that we can \textbf{normalize} $f$ on a gradient
shrinking soliton by setting
\begin{equation}\label{normeq}
R_g+|\grad f|_g^2-f = 0.
\end{equation}
It follows from (\ref{normeq}) and the fact that $R\ge 0$ (cf. e.g.
\cite{Zhang}), that $\grad f$ is a complete vector field. Letting
$T>0$ and considering the diffeomorphisms $\phi_t$ of $\m$ generated
by $\frac{1}{T-t}\grad f$ with $\phi_{T-1}=id,$ we obtain from the
definition of gradient shrinking soliton above a corresponding Ricci
flow $g(t)=(T-t)\phi_t^* g$ on $(-\infty,T)$ with
$(\m,g(T-1))=(\m,g).$ Canonically defining time-dependent functions
by $f(t):=\phi_t^* f,$ the flow satisfies
\begin{equation}
\Ric_{g(t)} + \grad^{g(t)}\grad f(t) = \frac{1}{2(T-t)} g(t) \qquad
\text{ and }\qquad \frac{\del}{\del t} f(t) = |\grad f(t)|_{g(t)}^2.
\label{cansoleq}
\end{equation}
We call a Ricci flow $(\m,g(t),f(t))$ on $(-\infty,T)$ with smooth
functions $f(t):\m\rightarrow \R$ satisfying (\ref{cansoleq}) a
\textbf{gradient shrinking soliton in canonical form}.

Let $(\m^n,g(t))$ be a (connected) Type I Ricci flow on $[0,T)$ as
defined in the Section \ref{intro}. For fixed $(p,t_0)\in \m\times
[0,T)$ and all $(q,\bar{t})\in \m\times [0,t_0],$ Perelman's
\textbf{reduced distance} (in forward time notation) is defined by
\begin{equation*}
l_{p,t_0}(q,\bar{t}):=\inf_{\gamma}\left\{\frac{1}{2\sqrt{t_0-\bar{t}}}
\int_{\bar{t}}^{t_0}\sqrt{t_0-t}\big(|\dot{\gamma}(t)|^2+R_{g(t)}(\gamma(t))\big)dt\right\},
\end{equation*}
where the infimum is taken over all curves
$\gamma:[\bar{t},t_0]\rightarrow \m \text{ with }
\gamma(t_0)=p,\gamma(\bar{t})=q.$ The corresponding \textbf{reduced
volume} is
\begin{equation*}\tilde{V}_{p,t_0}(\bar{t}):=\int_{\m}
v_{p,t_0}(q,\bar{t})dvol_{g(\bar{t})}(q),
\end{equation*}
where
\begin{equation*}
v_{p,t_0}(q,\bar{t}):=\big(4\pi(t_0-\bar{t})\big)^{-\frac{n}{2}}e^{-l_{p,t_0}(q,\bar{t})}.
\end{equation*}
We will use the following two results from \cite{Enders_Thesis}
(restricted here to the Type I case):
\begin{lemma}[Enders \cite{Enders_Thesis}, Theorem 3.3.1]\label{lsing}
Let $(\m^n,g(t))$ be a (connected) Type I Ricci flow on $[0,T)$,
$p\in \m$ and $t_k\nearrow T.$ Then there exists a locally Lipschitz
function
\begin{equation*}
l_{p,T}:\m\times (0,T)\rightarrow \mathbb{R},
\end{equation*}
which
is a subsequential limit
\begin{equation*}
l_{p,t_k}\xrightarrow{C^0_{loc}(\m\times (0,T))} l_{p,T}
\end{equation*}
and which for all \mbox{$(q,\bar{t})\in \m\times(0,T)$} satisfies
\begin{equation*}
-\frac{\del}{\del\bar{t}}l_{p,T}(q,\bar{t})-\Delta_{g(\bar{t})}
l_{p,T}(q,\bar{t})+|\nabla
l_{p,T}(q,\bar{t})|_{g(\bar{t})}^2-\RS_{g(\bar{t})}(q)+\frac{n}{2(T-\bar{t})}\ge
0
\end{equation*}
in the sense of distributions. Equivalently,
\begin{equation*}
\square^*_{g(\bar{t})} v_{p,T}(q,\bar{t})\le 0,
\end{equation*} where
\begin{equation*}
\square^*_{g(t)}:=-\frac{\del}{\del t}-\Delta_{g(t)}+\RS_{g(t)}
\end{equation*}
denotes the formal adjoint of the heat operator under the Ricci
flow, and
\begin{equation*}
v_{p,T}(q,\bar{t}):=\big(4\pi(T-\bar{t})\big)^{-\frac{n}{2}}e^{-l_{p,T}(q,\bar{t})}.
\end{equation*}
\end{lemma}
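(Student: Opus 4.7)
The approach is the familiar Arzel\`a--Ascoli one: for each $k$, Perelman's reduced distance $l_{p,t_k}$ is a locally Lipschitz function satisfying, by the standard monotonicity calculation of \cite{P1}, the inequality
\begin{equation*}
-\tfrac{\del}{\del \bar t} l_{p,t_k} - \Delta_{g(\bar t)} l_{p,t_k} + |\grad l_{p,t_k}|^2_{g(\bar t)} - \RS_{g(\bar t)} + \frac{n}{2(t_k-\bar t)} \ge 0
\end{equation*}
in the sense of distributions on $\m\times(0,t_k)$, equivalently $\square^*_{g(\bar t)} v_{p,t_k}\le 0$. The plan is (i) to obtain pointwise and Lipschitz bounds on $l_{p,t_k}$ that are uniform in $k$ on compact subsets of $\m\times(0,T)$, (ii) to extract a $C^0_{loc}$ subsequential limit $l_{p,T}$, and (iii) to pass the distributional inequality to the limit.

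\textbf{Uniform estimates.} Fix a compact $K\subset \m\times(0,T)$, so that on $K$ we have $\bar t\le T-2\delta$ and $d_{g(\bar t)}(p,q)\le D$ for some $\delta,D>0$. The Type I bound (\ref{typeIeq}) gives $|\Rm_{g(t)}|\le C/\delta$ on $[0,T-\delta]$, so the metrics $g(\bar t)$ for $(q,\bar t)\in K$ are uniformly equivalent on a fixed relatively compact neighbourhood. Since eventually $t_k-\bar t\ge \delta$, inserting a fixed geodesic test curve into the $\mathcal L$-functional produces a uniform upper bound $l_{p,t_k}(q,\bar t)\le L(K)$, while the standard Perelman lower bound together with the scalar curvature bound gives $l_{p,t_k}\ge -c(K)$. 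Perelman's gradient and time-derivative inequalities for the reduced distance, schematically
\begin{equation*}
|\grad l_{p,t_k}|^2_{g(\bar t)} \le \frac{C(l_{p,t_k}+1)}{t_k-\bar t} + |\RS_{g(\bar t)}|,\qquad \bigl|\tfrac{\del}{\del \bar t} l_{p,t_k}\bigr| \le \frac{C(l_{p,t_k}+1)}{t_k-\bar t}+|\RS_{g(\bar t)}|,
\end{equation*}
then combine with the pointwise bound and the uniform curvature control to give a uniform Lipschitz constant $\Lambda(K)$ on $K$. Arzel\`a--Ascoli yields a subsequence converging in $C^0_{loc}(\m\times(0,T))$ to a locally Lipschitz function $l_{p,T}$.

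\textbf{Passage to the limit.} Since $v_{p,t_k}=(4\pi(t_k-\bar t))^{-n/2}e^{-l_{p,t_k}}\to v_{p,T}$ locally uniformly and the coefficients of $\square^*_{g(\bar t)}$ are smooth on $\m\times(0,T)$, testing $\square^*_{g(\bar t)} v_{p,t_k}\le 0$ against an arbitrary nonnegative $\varphi\in C^\infty_c(\m\times(0,T))$, moving derivatives onto $\varphi$ by integration by parts, and sending $k\to\infty$ yields $\square^*_{g(\bar t)} v_{p,T}\le 0$ distributionally. The equivalent inequality for $l_{p,T}$ follows by a direct chain-rule computation, justified by the local Lipschitz regularity just established.

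\textbf{Main obstacle.} The crux is the uniform Lipschitz bound in Step 2. Perelman's pointwise estimates for reduced distance depend on the curvature on the spacetime region swept out by $\mathcal L$-geodesics, and a priori they could degenerate as $t_k\nearrow T$ because the curvature blows up near the singular time. The Type I assumption is precisely what saves the argument: restricting the base time to compacts of $(0,T)$ keeps both $1/(t_k-\bar t)$ and $|\Rm_{g(\bar t)}|\le C/(T-\bar t)$ bounded independently of $k$, and a careful check shows that the $\mathcal L$-geodesics from $(p,t_k)$ to points of $K$ stay in a region where curvature control is uniform. Carrying out this verification rigorously is the technical core of the argument, and is the content of \cite{Enders_Thesis} and, independently, \cite{Naber}.
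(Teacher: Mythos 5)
The paper gives no proof of this lemma: it is quoted verbatim from Enders' thesis \cite{Enders_Thesis} (Theorem 3.3.1), with an independent version in Naber \cite{Naber}, and the only related input the paper records is the set of uniform estimates in Lemma \ref{lest}. Your sketch is the standard and correct route used in those references: the Type I bound makes Perelman's local estimates for $l_{p,t_k}$ uniform on compacta of $\m\times(0,T)$ (this is exactly the content of Lemma \ref{lest}, with constants depending only on $n$ and the Type I constant $C$), Arzel\`a--Ascoli then yields the $C^0_{loc}$ subsequential limit, the distributional inequality $\square^* v_{p,t_k}\le 0$ passes to the limit by testing against a fixed nonnegative $\varphi\in C^\infty_c(\m\times(0,T))$ whose support eventually lies in $\m\times(0,t_k)$, and the two displayed inequalities are equivalent by the usual chain-rule computation for $v=(4\pi(T-\bar t))^{-n/2}e^{-l}$.
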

\begin{defn}\label{deflvsing}
We define $l_{p,T}$ as in Lemma \ref{lsing} to be \textbf{a reduced
distance based at the singular time $(p,T).$} Moreover, the
corresponding
\begin{equation*}
\tilde{V}_{p,T}(\bar{t}):=\int_\m v_{p,T}(q,\bar{t})
dvol_{g(\bar{t})}(q)
\end{equation*}
is denoted \textbf{a reduced volume based at the singular time
$(p,T)$} with $v_{p,T}$ being \textbf{a reduced volume density based
at the singular time $(p,T)$}.
\end{defn}
The next result states that similarly to Perelman's reduced volume,
any reduced volume based at singular time is also a monotone
quantity.
\begin{lemma}[Enders \cite{Enders_Thesis}, Theorem 3.4.3]\label{vsing}
Under the assumptions as in Definition \ref{deflvsing} we have
\begin{enumerate}[(i)]
  \item $\frac{d}{d\bar{t}}\tilde{V}_{p,T}(\bar{t})\ge 0,$
  \item $\lim_{\bar{t}\nearrow T}\tilde{V}_{p,T}(\bar{t})\le 1,$
  \item If $\tilde{V}_{p,T}(\bar{t}_1)=\tilde{V}_{p,T}(\bar{t}_2)$
  for $0<\bar{t}_1<\bar{t}_2<T,$ then $(\m,g(t),l_{p,T}(\,\cdot\,,t))$
  is a normalized gradient shrinking soliton in canonical form.
\end{enumerate}
\end{lemma}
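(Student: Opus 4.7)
My plan is to derive each of the three statements by transferring the corresponding classical property of Perelman's reduced volume $\tilde V_{p,t_k}$ to the limit $\tilde V_{p,T}$, leveraging the $C^0_{loc}$ convergence $l_{p,t_k}\to l_{p,T}$ from Lemma \ref{lsing} and the distributional inequality $\square^*_{g(\bar t)} v_{p,T}\le 0$. Before starting, I would record the Gaussian-type decay estimates for $v_{p,t_k}$ that are uniform in $k$: Perelman's quadratic lower bound $l_{p,t_k}(q,\bar t)\ge c\,d_{g(\bar t)}(p,q)^2 - C$, together with the Type I curvature assumption, yields a domination $v_{p,t_k}(q,\bar t)\le C(\bar t)\,e^{-c\,d_{g(\bar t)}(p,q)^2}$ that passes to the limit and gives the integrable bound on $v_{p,T}$ needed throughout.

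For (i), I would integrate the distributional inequality from Lemma \ref{lsing} against the volume form, formally computing
\[
\frac{d}{d\bar t}\tilde V_{p,T}(\bar t)=\int_\m\bigl(\partial_{\bar t} v_{p,T}-\RS_{g(\bar t)}v_{p,T}\bigr)\,dvol_{g(\bar t)}=-\int_\m \square^*_{g(\bar t)} v_{p,T}\,dvol_{g(\bar t)}-\int_\m\Delta_{g(\bar t)} v_{p,T}\,dvol_{g(\bar t)}.
\]
The Laplacian term vanishes after an approximation by cut-offs supported on geodesic balls, since the Gaussian decay above kills the boundary contributions; the remaining term is nonnegative by Lemma \ref{lsing}. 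For (ii), Perelman's classical estimate $\tilde V_{p,t_k}(\bar t)\le 1$ holds for every $\bar t<t_k$ (this follows from monotonicity of $\tilde V_{p,t_k}$ together with $\lim_{s\nearrow t_k}\tilde V_{p,t_k}(s)=1$). Fatou's lemma applied to the pointwise convergence $v_{p,t_k}\to v_{p,T}$ gives $\tilde V_{p,T}(\bar t)\le\liminf_k\tilde V_{p,t_k}(\bar t)\le 1$ for each fixed $\bar t\in(0,T)$, and combining with (i) produces the claim as $\bar t\nearrow T$.

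For (iii), equality $\tilde V_{p,T}(\bar t_1)=\tilde V_{p,T}(\bar t_2)$ combined with (i) forces $\frac{d}{d\bar t}\tilde V_{p,T}\equiv 0$ on $[\bar t_1,\bar t_2]$. By the integrated identity from (i), this gives $\square^*_{g(\bar t)} v_{p,T}\equiv 0$ distributionally on $\m\times[\bar t_1,\bar t_2]$. Parabolic regularity then upgrades $v_{p,T}$ to a smooth solution of the conjugate heat equation, hence $l_{p,T}$ is smooth on this slab. Perelman's pointwise identity expresses $\square^* v_{p,T}$ as a nonpositive Bochner-type quantity whose vanishing is equivalent to the soliton equation
\[
\Ric_{g(\bar t)}+\grad^{g(\bar t)}\grad l_{p,T}(\,\cdot\,,\bar t)=\frac{1}{2(T-\bar t)}g(\bar t),
\]
together with the normalization $\partial_{\bar t} l_{p,T}=|\grad l_{p,T}|^2_{g(\bar t)}$. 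These are exactly the conditions (\ref{cansoleq}) for a gradient shrinking soliton in canonical form; a standard unique-continuation argument for the parabolic system extends the identities from $[\bar t_1,\bar t_2]$ to all of $(0,T)$.

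The main obstacle is the rigorous justification of (i): $v_{p,T}$ is defined only through a subsequential limit and satisfies its evolution inequality merely in the distributional sense, so one has no direct PDE to integrate. Controlling the boundary terms in the integration by parts requires the uniform Gaussian bounds on $v_{p,t_k}$ described above, and these in turn rely crucially on the Type I hypothesis, which provides the bound on $\RS_{g(\bar t)}$ used in Perelman's quadratic lower bound for $l_{p,t_k}$.
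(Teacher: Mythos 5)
The paper does not prove Lemma \ref{vsing}; it is imported verbatim from Enders' thesis (Theorem~3.4.3), with a parallel version in \cite{Naber}, so there is no internal argument to compare against. Your strategy for (i) and (ii) is the natural one and sound in outline. One addition: to drop $\int_\m \Delta_{g(\bar t)} v_{p,T}\,dvol$ in (i) you need the gradient estimate of Lemma~\ref{lest}(ii), not only the quadratic lower bound, since the boundary flux is controlled by $|\nabla v_{p,T}| = |\nabla l_{p,T}|\,v_{p,T}$; these bounds are uniform in $k$ and pass to the limit, which is what makes the argument go through. The Fatou step in (ii) is fine.

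Part (iii) has a genuine gap. From constancy you correctly deduce $\square^*_{g(\bar t)} v_{p,T} = 0$ (distributionally, then smoothly by parabolic regularity). But that is the single scalar equation
\[
-\partial_{\bar t} l_{p,T} - \Delta l_{p,T} + |\nabla l_{p,T}|^2 - \RS_{g(\bar t)} + \frac{n}{2(T-\bar t)} = 0,
\]
whereas the soliton condition is a tensor equation, and your appeal to a ``Perelman pointwise identity expressing $\square^* v$ as a nonpositive Bochner-type quantity'' is not accurate for $v$ itself. Perelman's Bochner-type identity (his Proposition~9.1) is
$\square^*(w\,u) = -2(T-\bar t)\big|\Ric + \grad^2 f - \tfrac{1}{2(T-\bar t)}g\big|^2 u$
with $w := (T-\bar t)(2\Delta f - |\grad f|^2 + \RS) + f - n$, a quantity involving second derivatives of $f$, not a formula for $\square^* u$ of an arbitrary $u=(4\pi\tau)^{-n/2}e^{-f}$. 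To conclude the soliton equation one must know $w \equiv 0$, which in Perelman's setting follows from his \emph{second} reduced-distance inequality $w \le 0$ together with the $\mathcal{L}$-geometric behaviour near the basepoint. Lemma~\ref{lsing} only gives you the first inequality $\square^* v_{p,T}\le 0$; you must additionally show that the second inequality and enough of the $\mathcal{L}$-Jacobian comparison structure survive the subsequential limit $l_{p,t_k}\to l_{p,T}$, e.g., via a diagonal argument on minimizing $\mathcal{L}$-geodesics using the uniform $C^{0,1}$ bounds of Lemma~\ref{lest}. That transfer of $\mathcal{L}$-geometry to the singular-time limit is the real content of (iii) in Enders' and Naber's treatments and cannot be replaced by analysis of the conjugate heat operator alone.
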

Similar results to Lemma \ref{lsing} and Lemma \ref{vsing} have been
independently obtained in \cite{Naber}. We restate the estimates
derived there in the following adapted form.
\begin{lemma}[Naber \cite{Naber}, Proposition 3.6]\label{lest}
Let $(\m^n,g(t))$ be a (connected) Type I Ricci flow on $[0,T)$, and
let $(p,t_0)\in \m\times [0,T).$ Then there exist $K>0$ (only
dependent on $n$ and the Type I constant $C$) such that for all
$(q,\bar{t})\in \m\times (0,T)$
\begin{enumerate}[(i)]
\item $\frac{1}{K}\Big(1+\frac{d_{\bar{t}}(p,q)}{\sqrt{t_0-\bar{t}}}\Big)^2
-K\le l_{p,t_0}(q,\bar{t})\le
K\Big(1+\frac{d_{\bar{t}}(p,q)}{\sqrt{t_0-\bar{t}}}\Big)^2,$
\item $|\nabla l_{p,t_0}(q,\bar{t})|_{g(\bar{t})}(q)\le\frac{K}{\sqrt{t_0-\bar{t}}}
\Big(1+\frac{d_{\bar{t}}(p,q)}{\sqrt{t_0-\bar{t}}}\Big),$
\item $|\frac{\del}{\del\bar{t}}l_{p,t_0}(q,\bar{t})|_{g(\bar{t})}(q)
\le\frac{K}{t_0-\bar{t}}\Big(1+\frac{d_{\bar{t}}(p,q)}{\sqrt{t_0-\bar{t}}}\Big)^2.$
\end{enumerate}
\end{lemma}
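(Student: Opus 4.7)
The plan is to establish the three pointwise estimates separately; parts~(ii) and (iii) will be derived from (i) combined with standard differential identities along $\mathcal{L}$-geodesics, so the bulk of the work lies in part~(i). I would begin by isolating the scalar curvature contribution to $l_{p,t_0}$. Writing $\bar\tau := t_0-\bar t$ and $a := T-t_0$, the Type I hypothesis gives $|R_{g(t)}|\le nC/(T-t)$, and a direct substitution $u=\sqrt{\tau}$ evaluates
\[
\int_0^{\bar\tau}\frac{\sqrt{\tau}}{a+\tau}\,d\tau \;=\; 2\sqrt{\bar\tau}-2\sqrt{a}\,\arctan\!\sqrt{\bar\tau/a}\;\le\;2\sqrt{\bar\tau},
\]
so the scalar curvature term contributes at most a constant $K_0(n,C)$ to $l$, independently of the other parameters. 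The rest of the argument for (i) concerns the kinetic integrand.

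For the upper bound in~(i), let $\sigma$ be a unit-speed $g(\bar t)$-minimizing geodesic from $q$ to $p$, set $d:=d_{\bar t}(p,q)$, and try the test curve $\gamma(\tau):=\sigma(d\sqrt{\tau/\bar\tau})$, which has constant $\sqrt{\tau}$-speed in the frozen metric $g(\bar t)$. The Type I bound $|\partial_t g|_g\le 2nC/(T-t)$ controls the distortion $|\dot\gamma|^2_{g(t_0-\tau)}/|\dot\gamma|^2_{g(\bar t)}$ by a power of $(a+\bar\tau)/(a+\tau)$. Inserting this into the $\mathcal{L}$-length and integrating yields $l_{p,t_0}(q,\bar t)\le K(1+d/\sqrt{\bar\tau})^2$ directly in the regime $\bar\tau\lesssim a$. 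In the opposite regime $\bar\tau\gg a$, when the $t_0$-slice lies close to the singular time, one must work harder: I would use Perelman's additive distance-comparison (Lemma~8.3 of \cite{P1}) together with a concatenated test curve that holds $\gamma\equiv q$ on an initial interval and only sweeps along a minimizing geodesic on a terminal subinterval where the Type~I distortion stays bounded by a constant depending solely on $n$ and $C$. For the lower bound in~(i), apply Cauchy--Schwarz to any admissible curve:
\[
\int_0^{\bar\tau}\sqrt{\tau}\,|\dot\gamma|^2_{g(t_0-\tau)}\,d\tau \;\cdot\; \int_0^{\bar\tau}\frac{d\tau}{\sqrt{\tau}}\;\ge\; \left(\int_0^{\bar\tau}|\dot\gamma|_{g(t_0-\tau)}\,d\tau\right)^2.
\]
The right-hand side is the squared total $g(t)$-length of $\gamma$, which, via the same Type~I distance comparisons, is bounded below by a constant multiple of $d_{\bar t}(p,q)^2$. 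Dividing by $2\sqrt{\bar\tau}$ and combining with the scalar curvature bound gives the lower bound in~(i).

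For~(ii) and (iii), I would invoke the pointwise identities of Perelman that express $|\nabla l|^2_{g(\bar t)}(q)$ and $\partial_{\bar t} l(q,\bar t)$ along an $\mathcal{L}$-geodesic in terms of $l/\bar\tau$, the scalar curvature at the endpoint, and Hamilton's integrated curvature quantity involving $\Ric(X,X)$, $\nabla R$ and $\partial_t R$. Type~I gives uniform pointwise bounds on each of these terms, so Hamilton's quantity can be estimated by a constant multiple of $\sqrt{\bar\tau}(1+l(q,\bar t))$, and substitution with part~(i) then yields the quadratic bounds in (ii) and (iii) with constants still depending only on $n$ and $C$. The main obstacle throughout is the metric-distortion factor $((T-\bar t)/(T-t))^{nC}$ produced by the Type~I bound on $\partial_t g$: it diverges as $t\uparrow T$, and the crux of the argument is arranging that this dangerous factor always appears multiplied by a compensating quantity --- either the $\sqrt{\tau}$ weight in the $\mathcal{L}$-integrand or a short time subinterval chosen explicitly --- so that the final constants depend only on $n$ and $C$, as claimed.
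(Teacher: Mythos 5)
This lemma is quoted directly from Naber \cite{Naber} (Proposition 3.6, ``restated ... in the following adapted form''); the paper gives no proof, so there is nothing in the paper itself against which to check your argument step by step. Your reconstruction does use the standard ingredients --- Perelman's first and second variation identities for $\mathcal{L}$-geodesics, Shi-type derivative bounds that the Type~I hypothesis makes uniformly scale-invariant, and Perelman's Lemma~8.3 to control distance distortion when $t_0\to T$ --- and those are indeed the tools one expects Naber's proof to rest on.

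Two things in the sketch need tightening, however. First, a small slip: in the concatenated test curve for the upper bound the curve should be held fixed at $p$ (the basepoint at $\tau=0$), not at $q$, on the initial $\tau$-interval; it is the endpoint $\gamma(0)=p$ that sits at the nearly singular time $t_0$. Second, and more substantively, the lower bound in~(i) is not as immediate as you claim. After Cauchy--Schwarz you are left with $\bigl(\int_0^{\bar\tau}|\dot\gamma|_{g(t_0-\tau)}\,d\tau\bigr)^2$, which is a mixed-time quantity, not a length in any single metric $g(t)$, and the multiplicative distortion $\bigl((T-\bar t)/(T-t)\bigr)^{\beta}$ coming from the pointwise Ricci bound does \emph{not} give a bound uniform in $a=T-t_0$ once $\beta>\tfrac12$: the factor blows up as $a\to 0$, which is precisely the regime of interest. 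What rescues the argument is that Perelman's Lemma~8.3 gives an \emph{additive} comparison $|d_{t_1}(p,q)-d_{t_2}(p,q)|\le K\sqrt{T-t_1}$, which in the regime $a\le\bar\tau$ contributes only an error of order $\sqrt{\bar\tau}$; you should state explicitly that this additive estimate, not a multiplicative one, is what converts the mixed-time integral into $d_{\bar t}(p,q)$ up to an $O(\sqrt{\bar\tau})$ error, which then folds into the $-K$ term of part~(i). As written, ``bounded below by a constant multiple of $d_{\bar t}(p,q)^2$'' elides exactly the step where uniformity in $t_0$ is at stake.
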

We now show that parabolic rescaling limits in a Type I Ricci flow
(around any point $p\in \m$ at the singular time $T$) have a
gradient shrinking soliton structure. For completeness, we reprove
this result which was first obtained in \cite{Naber}.
\begin{thm}[cf. Naber \cite{Naber}, Theorem 1.5]\label{limsolthm}
Let $(\m^n,g(t),p),\,t\in [0,T),\,p\in \m$ be a pointed Type I Ricci
flow, and $\lambda_j \nearrow \infty.$ Then any pointed
Cheeger-Gromov-Hamilton limit flow $(\m^n_{\infty},
g_{\infty}(t),p_\infty),$ $\,t\in (-\infty,0),$ of the parabolically
rescaled Ricci flows $g_j(t):=\lambda_j g(T+\frac{t}{\lambda_j})$ is
a normalized gradient shrinking soliton in canonical form.
\end{thm}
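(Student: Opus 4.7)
The plan is to combine a Cheeger--Gromov--Hamilton compactness argument for the rescaled flows with a diagonal limit of the reduced distances $l_{p,T}$ based at the singular time, and then apply the rigidity statement of Lemma \ref{vsing}(iii). For the compactness, the rescaling $g_j(t) = \lambda_j g(T + t/\lambda_j)$ turns the Type I bound (\ref{typeIeq}) into the uniform curvature bound $|\Rm_{g_j(t)}|_{g_j(t)} \leq C/|t|$ on $[-\lambda_j T, 0)$, independent of $j$ on compact subintervals of $(-\infty, 0)$. Perelman's $\kappa$-noncollapsing supplies the injectivity radius control and Shi's estimates the higher derivative bounds, so Hamilton's compactness theorem yields a subsequential pointed limit Ricci flow $(\m_\infty, g_\infty(t), p_\infty)$ on $(-\infty, 0)$, itself Type I with ``singular time'' $t=0$.

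Next I would transport $l_{p,T}$ to the rescaled picture by setting $l_j(q,\bar t) := l_{p,T}(q, T + \bar t/\lambda_j)$ and $v_j := (4\pi|\bar t|)^{-n/2} e^{-l_j}$. Scale invariance makes each $l_j$ a reduced distance based at the singular time for $g_j$, and the estimates of Lemma \ref{lest} rescale to uniform-in-$j$ quadratic bounds on $l_j$ and uniform bounds on $|\grad l_j|$ and $|\del_{\bar t} l_j|$ on parabolic neighbourhoods of $(p_\infty, 0)$. A diagonal subsequence then extracts a locally Lipschitz $l_\infty \in C^0_{loc}(\m_\infty \times (-\infty,0))$, and the two-sided quadratic control of $l_j$ yields uniform Gaussian decay of the $v_j$ in the spatial variable.

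Now I would deduce constancy of the limit reduced volume. By Lemma \ref{vsing}(i)--(ii), $\tilde V_{p,T}$ is nondecreasing on $(0,T)$ and bounded above by $1$, so $V_\infty := \lim_{\bar t \nearrow T}\tilde V_{p,T}(\bar t)$ exists, and the uniform lower bound on $v_j$ shows $V_\infty > 0$. For each fixed $\bar t \in (-\infty, 0)$, the rescaled reduced volume satisfies $\tilde V_j(\bar t) = \tilde V_{p,T}(T + \bar t/\lambda_j) \to V_\infty$; the Gaussian decay of $v_j$ together with the Cheeger--Gromov convergence of the underlying volume forms permits dominated convergence, giving $\tilde V_j(\bar t) \to \int_{\m_\infty} v_\infty(\cdot, \bar t)\, dvol_{g_\infty(\bar t)} =: \tilde V_\infty(\bar t)$. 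Hence $\tilde V_\infty \equiv V_\infty$ is constant on $(-\infty, 0)$.

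Finally, since the limit flow is Type I with singular time $0$ and $l_\infty$ is naturally a reduced distance based at its singular time, Lemma \ref{vsing}(iii) applied to $\tilde V_\infty$ forces $(\m_\infty, g_\infty(t), l_\infty(\cdot, t))$ to be a normalized gradient shrinking soliton in canonical form. The main obstacle is the identification of $l_\infty$ as a reduced distance based at the singular time for $g_\infty$ and the corresponding passage of the distributional inequality $\square^*_{g_j} v_j \leq 0$ of Lemma \ref{lsing} to the limit, so that constancy of $\tilde V_\infty$ genuinely implies $\square^*_{g_\infty} v_\infty = 0$ (and thence, via the standard pointwise identity for $\square^* v$, the shrinking soliton equation for $l_\infty$); both steps depend essentially on the uniform Gaussian decay from Lemma \ref{lest}(i), which legitimises the integrations and integrations by parts on the noncompact limit.
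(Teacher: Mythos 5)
Your proposal follows essentially the same route as the paper's own proof: obtain the Cheeger--Gromov--Hamilton limit via the scale-invariant Type I bound and $\kappa$-noncollapsing from the reduced volume lower bound, transport $l_{p,T}$ to the rescaled flows by $l_j(\cdot,\bar t)=l_{p,T}(\cdot,T+\bar t/\lambda_j)$, pass to a locally Lipschitz limit $l_\infty$ using the uniform estimates of Lemma~\ref{lest}, observe that the limiting reduced volume is the constant $\lim_{\bar t\nearrow T}\tilde V_{p,T}(\bar t)\in(0,1]$, and then invoke the rigidity mechanism of Lemma~\ref{vsing}(iii). Your extra remarks about Gaussian decay justifying the dominated-convergence step and about transferring $\square^*_{g_j}v_j\le 0$ to the limit are exactly the technical points the paper's phrase ``formal reduced volume\dots conclude as in the proof of Lemma~\ref{vsing}(iii)'' glosses over, so the argument is sound and not a different approach.
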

\begin{proof}
Because of the Type I curvature bound, we have at any $x\in \m$ that
\begin{equation}\label{gjbound}
\begin{aligned}
|\Rm_{g_j(t)}|_{g_j(t)}(x)&=\frac{1}{\lambda_j}
|\Rm_{g(T+\frac{t}{\lambda_j})}|_{g(T+\frac{t}{\lambda_j})}(x)\\
&\le \frac{C}{\lambda_j\big(T-(T+\frac{t}{\lambda_j})\big)}
=\frac{C}{-t}.
\end{aligned}
\end{equation}
This gives a uniform curvature bound on compact subsets of
$(-\infty,0).$ Together with Perelman's no local collapsing theorem
(which also holds for complete $\m$ because of the uniform lower
bound on the reduced volume as described below), we can use the
Cheeger-Gromov-Hamilton Compactness Theorem \cite{CGHcomp} to
extract from the sequence $(\m,g_j(t),p)$ a complete pointed
subsequential limit Ricci flow $(\m_{\infty},
g_{\infty}(t),p_{\infty})$ on $(-\infty,0),$ which is still Type I.

Now, let $l_{p,T}$ be any reduced distance based at the singular
time $(p,T)$ for the Ricci flow $(\m,g(t))$ on $[0,T)$ as defined above.
For each $(q,\bar{t})\in \m\times
(-\infty,0),$ consider for large enough~$j$
\begin{equation}\label{ljdef}
l^j_{p,0}(q,\bar{t}):=l_{p,T}(q,T+\tfrac{\bar{t}}{\lambda_j}),
\end{equation}
which is a reduced distance based at the singular time $(p,0)$ for
the rescaled Ricci flow $(\m,g_j(t))$ on $[-\lambda_j T,0)$ because
of the scaling properties of the reduced distance. The corresponding
reduced volumes are then related by
\begin{equation*}
\tilde{V}^j_{p,0}(\bar{t})=\tilde{V}_{p,T}(T+\tfrac{\bar{t}}{\lambda_j}),
\end{equation*}
and we can conclude, using also Lemma \ref{vsing}, that
\begin{equation*}
\tilde{V}^j_{p,0}\xrightarrow{j\rightarrow\infty} \lim_{t\nearrow
T}\tilde{V}_{p,T}(t)\,\in\,(0,1]
\end{equation*} uniformly on compact subsets of
$(-\infty,0).$

The uniform estimates in Lemma \ref{lest} hold for $l_{p,T}$ by
construction, and hence by (\ref{ljdef}) for each $l_{p,0}^j.$ Note
that by (\ref{gjbound}) they have the same Type I bound $C$. Hence
we can conclude that there exists a locally Lipschitz function
$l^\infty_{p_\infty,0}$ on the limit manifold
$\m_\infty\times(-\infty,0),$ such that
\begin{equation*}
l^j_{p,0}\xrightarrow{C_{loc}^0}l^\infty_{p_\infty,0}.
\end{equation*}
Since its corresponding formal reduced volume
$V^\infty_{p_\infty,0}$ is constant, we can conclude as in the proof
of Lemma \ref{vsing} (iii) that $(\m_{\infty},
g_{\infty}(t),l^\infty_{p_\infty,0}(\,\cdot\,,t))$ is a normalized
gradient shrinking soliton in canonical form.
\end{proof}
To obtain a complete proof of Theorem \ref{mainthm1}, it remains to
show that for Type I singular points $p\in\Sigma_I$ the rescaling
limit flow $(\m_{\infty}, g_{\infty}(t))$ in Theorem \ref{limsolthm}
is nontrivial and hence a suitable singularity model.

Our proof is based on Perelman's pseudolocality theorem, which
states the following.
\begin{prop}[Perelman \cite{P1}, Theorem 10.3]\label{pseudolocality}
There exist $\ep,\delta>0$ depending on $n$ with the following
property. Suppose $g(t)$ is a complete Ricci flow with bounded
curvature on an $n$-dimensional manifold $\m^n$ for $t\in[0,(\ep
r_0)^2)$. Moreover, suppose that $r_0>0$, $p\in\m$ and assume that
at $t=0$ we have $|\Rm_{g(0)}|\leq r_0^{-2}$ in $B_{g(0)}(p,r_0)$
and $\mathrm{Vol}_{g(0)}\big(B_{g(0)}(p,r_0)\big)\geq
(1-\delta)\omega_n r_0^n$, where $\omega_n$ is the volume of the
unit ball in $\mathbb{R}^n$. Then there holds the following estimate
\begin{equation}\label{plestimate}
|\Rm_{g(t)}|(x)\leq (\ep r_0)^{-2}, \quad \text{for } 0\leq t<(\ep
r_0)^2, x\in B_{g(t)}(p,\ep r_0).
\end{equation}
\end{prop}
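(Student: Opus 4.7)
The plan is to follow Perelman's original argument from \cite[Section 10]{P1}. It is a contradiction scheme combining point-picking with the monotonicity of the $\mathcal{W}$-entropy and the fact that an almost-Euclidean volume ratio forces an almost-Euclidean logarithmic Sobolev inequality. Heuristically, the hypothesis $\mathrm{Vol}_{g(0)}\bigl(B_{g(0)}(p,r_0)\bigr)\ge(1-\delta)\omega_n r_0^n$ says that $g(0)$ is close to flat on $B_{g(0)}(p,r_0)$ in the sense controlled by the log-Sobolev inequality; monotonicity of $\mathcal{W}$ along conjugate heat flow then prevents curvature from concentrating near $p$ within the short parabolic time scale $(\ep r_0)^2$.

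First, I would negate the conclusion and extract sequences $\ep_k,\delta_k\to 0$, pointed flows $(\m_k,g_k(t),p_k)$, scales $r_k>0$ satisfying the hypotheses, and space-time points $(y_k,s_k)$ with $s_k\in[0,(\ep_k r_k)^2)$, $y_k\in B_{g_k(s_k)}(p_k,\ep_k r_k)$ and $|\Rm_{g_k(s_k)}|(y_k)>(\ep_k r_k)^{-2}$. After a preliminary rescaling one may assume $r_k=1$. A parabolic point-picking argument (Perelman's Claim 1 in \cite[Section 10]{P1}) then replaces $(y_k,s_k)$ by improved bad points $(\bar y_k,\bar s_k)$ at which $Q_k:=|\Rm|(\bar y_k,\bar s_k)$ essentially dominates the curvature on a backward parabolic neighbourhood of controlled size; rescaling by $Q_k$ would produce a smooth bounded-curvature limit flow were it not for the incoming contradiction.

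The heart of the argument is an entropy comparison. For each $k$, take a compactly supported approximation to the fundamental solution of the conjugate heat equation based at the bad point, write it as $u_k=(4\pi\tau)^{-n/2}e^{-f_k}$, and track Perelman's $\mathcal{W}$-functional $\mathcal{W}(g_k(t),f_k(t),\tau)$ along the backward flow from $(\bar y_k,\bar s_k)$ down to time $0$. Its monotonicity yields a one-sided inequality between $\mathcal{W}$ near the bad point, which the curvature concentration at $\bar y_k$ together with standard heat kernel asymptotics force to be very negative as $k\to\infty$, and $\mathcal{W}$ at time $0$, which a logarithmic Sobolev inequality on $B_{g_k(0)}(p_k,1)$ derived from the almost-Euclidean volume ratio forces to be close to $0$ (after verifying that $u_k(\cdot,0)$ remains essentially supported in this ball). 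This comparison produces the required contradiction for $k$ large.

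The principal technical obstacle is the localisation step: the hypothesis only controls the geometry on $B_{g(0)}(p,r_0)$, so one must construct a cutoff and verify that the conjugate heat flow from $(\bar y_k,\bar s_k)$ does not leak out of this ball on the parabolic timescale $(\ep r_0)^2$. Accomplishing this simultaneously with extracting a sharp-enough localised log-Sobolev inequality from the volume ratio is what actually pins down the constants $\ep$ and $\delta$; these cutoff and heat-kernel tail estimates are the technical bulk of Perelman's proof, and any clean exposition would have to reproduce them.
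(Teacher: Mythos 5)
The paper does not prove this proposition: it is quoted verbatim as Perelman's pseudolocality theorem (\cite{P1}, Theorem~10.3) and used as a black box, so there is no in-paper proof to compare against. Your sketch is nonetheless a faithful high-level outline of Perelman's actual argument: contradiction sequences with $\ep_k,\delta_k\to 0$, parabolic point-picking (Claims 1 and 2 of \cite{P1}, \S 10), a conjugate heat kernel $u_k=(4\pi\tau)^{-n/2}e^{-f_k}$ based at the chosen bad point, Perelman's pointwise Harnack inequality $\tilde v\le 0$ for $\tilde v=\bigl[\tau(2\Delta f-|\nabla f|^2+R)+f-n\bigr]u$, a cutoff function $h$ to localize the entropy comparison, and the tension between the almost-Euclidean logarithmic Sobolev inequality at $t=0$ (forcing the localized entropy to be close to zero) and the nontrivial blow-up limit at the bad point (forcing it to be bounded away from zero). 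One small refinement to your phrasing: what Perelman actually tracks is not $\mathcal W$ itself but the localized integral $\int h\tilde v\,dV$, whose time derivative can be given a sign up to error terms controlled by the curvature bound extracted from point-picking; this is the precise incarnation of ``monotonicity of $\mathcal W$'' that survives the cutoff. You correctly identify the localization and the heat-kernel tail estimates as the technical bulk; those are the steps that actually fix $\ep$ and $\delta$. In short, the sketch is sound, but since the paper only cites this statement, nothing in it was required of you here.
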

\begin{rmk}\label{pseudoimprove}
Note that by choosing a smaller $\ep$, estimate (\ref{plestimate})
holds for $x\in B_{g(0)}(p,\ep r_0)$. This follows directly from the following lemma, variants of which can be found elsewhere, for example \cite{formations}.
\end{rmk}
\begin{lemma}\label{distlemma}
Suppose that $g(t)$ is a Ricci flow on a manifold $\m^n$ for $t\in
[0,T]$. Suppose further that for some $p\in\m$ and $r>0$, we have
$B_{g(t)}(p,r)\subset\subset\m$, and $|\Ric|\leq M$ on
$B_{g(t)}(p,r)$ for each $t\in [0,T]$. Then
\begin{equation*}
B_{g(0)}(p,e^{-Mt}r)\subset B_{g(t)}(p,r),
\end{equation*}
for all $t\in [0,T]$.
\end{lemma}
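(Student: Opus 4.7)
The plan is to fix a $g(0)$-minimizing geodesic $\gamma\colon[0,1]\to\m$ from $p$ to a point $q\in B_{g(0)}(p,e^{-Mt}r)$, and to track its length $L_s:=L_{g(s)}(\gamma)$ as $s$ grows from $0$ to $t$. Note that $L_0=d_{g(0)}(p,q)<e^{-Mt}r<r$, so the image of $\gamma$ is compactly contained in $B_{g(0)}(p,r)$. Differentiating $L_s=\int_0^1|\dot\gamma|_{g(s)}\,du$ under the integral sign and using $\frac{\del}{\del s}g=-2\Ric$ gives
\begin{equation*}
\frac{d}{ds}L_s=-\int_0^1\frac{\Ric(\dot\gamma,\dot\gamma)}{|\dot\gamma|_{g(s)}}\,du,
\end{equation*}
so whenever $|\Ric|\le M$ holds on the image of $\gamma$ we obtain $|dL_s/ds|\le ML_s$, and hence the Gronwall-type bound $L_s\le e^{Ms}L_0$.

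The main obstacle is that $|\Ric|\le M$ is only assumed on the time-varying ball $B_{g(s)}(p,r)$, and a priori the fixed curve $\gamma$ could exit this region as $s$ grows. To close this gap I would run a standard continuity/bootstrap argument. Define
\begin{equation*}
T^*:=\sup\big\{s\in[0,t]\,:\,\gamma([0,1])\subset B_{g(s')}(p,r)\text{ for every }s'\in[0,s]\big\}.
\end{equation*}
Since at $s=0$ one has $\gamma\subset\overline{B_{g(0)}(p,L_0)}\subset B_{g(0)}(p,r)$, the compactness of $\gamma([0,1])$ together with continuity of $g$ in $s$ forces $T^*>0$. For $s<T^*$ the Gronwall bound applies, giving $L_s\le e^{Ms}L_0<e^{-M(t-s)}r\le r$, which in turn yields the \emph{strict} inclusion $\gamma([0,1])\subset\overline{B_{g(s)}(p,L_s)}\subset B_{g(s)}(p,r)$. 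If $T^*$ were strictly less than $t$, this strict inclusion would persist on a slightly larger $s$-interval by continuity, contradicting the maximality of $T^*$. Hence $T^*=t$.

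Applying the Gronwall bound at $s=t$ now gives $L_t\le e^{Mt}L_0<e^{Mt}\cdot e^{-Mt}r=r$, so $d_{g(t)}(p,q)\le L_t<r$ and $q\in B_{g(t)}(p,r)$, as required. The only real ingredients are Hamilton's evolution $\frac{\del}{\del s}g=-2\Ric$ and the compact containment $B_{g(s)}(p,r)\subset\subset\m$, which lets us select a genuinely minimizing $\gamma$ at $s=0$ and keeps the comparison well-defined throughout $[0,t]$.
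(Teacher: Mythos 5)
Your argument is correct and matches the paper's in essence: both track the $g(s)$-length of a fixed $g(0)$-minimizing geodesic via a Gronwall bound and close a continuity/bootstrap loop to keep the geodesic inside the region where $|\Ric|\leq M$ holds. The paper phrases the bootstrap as a ``least failure time'' contradiction with a $\sigma<1$ margin, whereas you take $T^*$ as the supremum of good times and use the strict inequality $L_0<e^{-Mt}r$ as the margin --- these are interchangeable formulations of the same idea.
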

\begin{proof}
Let $\sigma\in (0,1)$ be arbitrary. It suffices to show that
\begin{equation*}
\overline{B_{g(0)}(p,e^{-Mt}\sigma r)}\subset B_{g(t)}(p,r),
\end{equation*}
for each $t\in [0,T]$. Clearly this is true for $t=0$; suppose it
fails for some larger $t=t_0\in (0,T]$. Without loss of generality,
$t_0$ is the least such time.

Pick a minimizing geodesic $\gamma$, with respect to $g(0)$, from
$p$ to a point $y\in\m$ with $d_{g(0)}(p,y)=e^{-Mt_0}\sigma r$ and
$d_{g(t_0)}(p,y)=r$.

Then for all $t\in [0,t_0)$, $\gamma$ lies within
$B_{g(0)}(p,e^{-Mt}\sigma r)\subset B_{g(t)}(p,r)$, and hence (by
hypothesis) $|\Ric|\leq M$ on $\gamma$ over this range of times.

But then $Length_{g(t)}(\gamma)\leq e^{Mt}Length_{g(0)}(\gamma)$ for
all $t\in [0,t_0)$, and hence
\begin{equation*}
Length_{g(t_0)}(\gamma)\leq e^{Mt_0}e^{-Mt_0}\sigma r = \sigma r<r,
\end{equation*}
and this implies $d_{g(t_0)}(p,y)<r$, a contradiction.
\end{proof}
We are now ready to prove that the blow-up limit is nontrivial.
\begin{proof}[Proof of Theorem \ref{mainthm1}]
Assume that $g_\infty(t)$ is flat for all $t<0$. In particular,
$g_\infty(t)$ is independent of time, and we denote it by $\hat{g}$.

Take $r_0>0$ smaller than the injectivity radius of $\hat{g}$ at $p_\infty$
to ensure that $B_{\hat{g}}(p_\infty,r_0)$ is a Euclidean ball. By the
Cheeger-Gromov-Hamilton convergence, taking $j$ large enough, $B_{g_j(-(\ep
r_0)^2)}(p,r_0)$ is as close as we want to a Euclidean ball,
where $\ep$ is chosen as in Proposition \ref{pseudolocality}. In
particular, we may fix $j$ sufficiently large such that
$B_{g_j(-(\ep r_0)^2)}(p,r_0)$ satisfies the conditions of
Proposition \ref{pseudolocality} and hence, using also Remark
\ref{pseudoimprove},
\begin{equation}\label{plestim2}
|\Rm_{g_j(t)}|_{g_j(t)}(x)\leq (\ep r_0)^{-2} \quad \text{for }
-(\ep r_0)^2\leq t < 0, x\in B_{g_j(-(\ep r_0)^2)}(p,\ep r_0).
\end{equation}
On the other hand, since $p\in\Sigma_I,$ there exists an essential
blow-up sequence $(p_i,t_i)$ with $p_i\rightarrow p$ such that for a
constant $c>0$ as in Definition \ref{TIsingpoint}
\begin{equation*}
|\Rm_{g_j(\lambda_j(t_i-T))}|_{g_j(\lambda_j(t_i-T))}(p_i)\geq
\frac{c}{\lambda_j(T-t_i)}.
\end{equation*}
For $i$ large enough, this contradicts (\ref{plestim2}). Thus
$g_\infty(t)$ cannot be flat.
\end{proof}
Nontrivial gradient shrinking solitons also arise as blow-down limits
of certain ancient Ricci flow solutions (which are singularity models)
as shown by Perelman \cite{P1} in 3 dimensions, and recently by Cao and
Zhang \cite{CaoZhang} for higher dimensions in the Type I case.
\section{Singular sets}
\label{singsize}
A crucial ingredient for the theorems proved in this and the next
section is the following rigidity result for gradient shrinking
solitons as for example shown in \cite{PRS09}.
\begin{lemma}[Pigola-Rimoldi-Setti \cite{PRS09}, Theorem 3]\label{rigidity}
Let $(\m^n,g,f)$ be a complete gradient shrinking soliton. Then the
scalar curvature $R_g$ is nonnegative, and if there exists a point
$p\in\m$ where $R_g(p)=0$, then $(\m,g,f)$ is the Gaussian soliton,
i.e. isometric to flat Euclidean space $(\R^n,g_{\R^n})$.
\end{lemma}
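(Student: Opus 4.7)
The plan is to derive from the soliton equation a drift-elliptic identity for the scalar curvature $R$, and then apply the strong minimum principle together with a classical structure theorem for manifolds admitting a function whose Hessian is a positive multiple of the metric.

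I would begin by recording the standard identities on a shrinking soliton $\Ric + \nabla\nabla f = \tfrac12 g$: tracing gives $R + \Delta f = n/2$; the twice-contracted second Bianchi identity combined with the soliton equation yields $\nabla R = 2\Ric(\nabla f,\,\cdot\,)$; a further divergence computation then produces
\begin{equation*}
\Delta_f R := \Delta R - \langle \nabla f,\nabla R\rangle = R - 2|\Ric|^2.
\end{equation*}
The nonnegativity $R \geq 0$ on a complete shrinker is the fact already cited in the text (e.g.\ \cite{Zhang}); it can be seen, for instance, by passing to the associated ancient Ricci flow $g(t) = (T-t)\phi_t^* g$ on $(-\infty,T)$ supplied by (\ref{cansoleq}) and invoking the ancient-solution scalar curvature bound, or via a direct weighted maximum principle that exploits the quadratic growth of $f$.

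For the rigidity statement, suppose $R(p)=0$ at some point $p$. Since $R \geq 0$, the point $p$ is an interior minimum. The identity above rearranges to
\begin{equation*}
\Delta_f R - R = -2|\Ric|^2 \leq 0,
\end{equation*}
which is a linear elliptic differential inequality; the drift operator $\Delta_f$ is uniformly elliptic and smooth on precompact subsets, and the zeroth-order coefficient $-1$ has the favourable sign for Hopf's strong minimum principle. It follows that $R$ vanishes identically in a neighbourhood of $p$. The same reasoning shows that $\{R=0\}$ is open in $\m$; it is manifestly closed, so by connectedness of $\m$ we conclude $R \equiv 0$ everywhere. Plugging back into the displayed identity gives $|\Ric|^2 \equiv 0$, so the soliton equation collapses to $\nabla\nabla f = \tfrac12 g$. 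Tashiro's classical rigidity theorem for complete Riemannian manifolds carrying a function whose Hessian is a positive constant multiple of the metric then identifies $(\m,g)$ isometrically with $(\R^n,g_{\R^n})$, with $f$ (up to an additive constant) equal to $\tfrac14 d(\,\cdot\,,x_0)^2$ based at the unique critical point $x_0$ of $f$; this is precisely the Gaussian soliton.

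The main obstacle is the nonnegativity of $R$ in the noncompact setting. On a compact shrinker Hamilton's maximum principle applied to $\partial_t R = \Delta R + 2|\Ric|^2$ handles it in one line, but on a complete noncompact soliton one cannot in general locate a minimum, and by itself the drift equation only gives $\Delta_f R \leq R(1 - 2R/n)$, which is insufficient without controlling $R$ at infinity. One must either go through the ancient-solution machinery for the associated Ricci flow or invoke the $f$-weighted maximum-principle technology of the cited reference. Once $R \geq 0$ is established, the rigidity half is essentially a two-line argument, combining the strong minimum principle with Tashiro's theorem.
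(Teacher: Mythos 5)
The paper cites this lemma from Pigola--Rimoldi--Setti \cite{PRS09} without reproducing a proof, so there is no in-paper argument to compare against. Your proof is correct and is essentially the argument of the cited source: the drift-Laplacian identity $\Delta R - \langle\nabla f,\nabla R\rangle = R - 2|\Ric|^2$, an $f$-weighted (or ancient-flow) maximum principle for $R\geq 0$, Hopf's strong minimum principle to propagate a zero of $R$, and Tashiro's Hessian rigidity theorem to identify the resulting flat soliton with the Gaussian one.
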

We use this lemma to prove Theorem \ref{mainthm2}, i.e. that $\Sigma
\subseteq \Sigma_R$. As a first step, we show that the Type I
singular set $\Sigma_I$ is characterized by the blow-up of the
scalar curvature at the Type I rate, i.e.\ $\Sigma_I = \Sigma_R$.
\begin{thm}\label{thmRblowup}
Let $(\m^n,g(t))$ be a Type I Ricci flow on $[0,T)$ with singular
time $T$, Type I singular set $\Sigma_I$ as in Definition
\ref{TIsingpoint} and $\Sigma_R$ as in Definition \ref{Rsingpoint}.
Then $\Sigma_I = \Sigma_R$.
\end{thm}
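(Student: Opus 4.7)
By the nested inclusions (\ref{nestedsets}), it suffices to prove $\Sigma_I \subseteq \Sigma_R$. Let $p\in\Sigma_I$. The Type I hypothesis (\ref{typeIeq}) together with the pointwise bound $|R_{g(t)}|\leq c_n|\Rm_{g(t)}|$ immediately gives an upper bound $R_{g(t)}(p)\leq C'/(T-t)$, so the task reduces to producing a constant $c>0$ such that $R_{g(t)}(p)\geq c/(T-t)$ on some interval $[T-c,T)$.

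I would argue by contradiction: assume no such $c$ exists. Then one can extract a sequence $t_j\nearrow T$ with $(T-t_j)R_{g(t_j)}(p)\to 0$. Set $\lambda_j:=1/(T-t_j)\to\infty$ and consider the rescaled flows $g_j(t)=\lambda_j g(T+t/\lambda_j)$. By Theorem \ref{mainthm1}, a subsequence converges in the pointed Cheeger-Gromov-Hamilton sense to a normalized \textbf{nontrivial} gradient shrinking soliton $(\m_\infty,g_\infty(t),p_\infty)$ in canonical form. Under the rescaling the scalar curvature transforms as $R_{g_j(-1)}(p)=(T-t_j)\,R_{g(t_j)}(p)$, which tends to $0$ by assumption. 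Since $R$ passes to Cheeger-Gromov-Hamilton limits at the basepoint, this forces $R_{g_\infty(-1)}(p_\infty)=0$. But the rigidity statement Lemma \ref{rigidity}, applied to $(\m_\infty,g_\infty(-1),f_\infty(-1))$, then forces $(\m_\infty,g_\infty(-1))$ to be isometric to flat Euclidean space, i.e.\ the Gaussian soliton. This contradicts the nontriviality asserted by Theorem \ref{mainthm1}, completing the proof.

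The entire argument is essentially a packaging of two previously established ingredients: Theorem \ref{mainthm1}, which at any $p\in\Sigma_I$ produces a nontrivial shrinking soliton along \emph{every} rescaling sequence $\lambda_j\to\infty$, and Lemma \ref{rigidity}, which upgrades nontriviality to the strict positivity $R_{g_\infty}>0$ everywhere. There is therefore no substantial new obstacle: the rescaling identity for $R$ and the persistence of pointwise curvature values under Cheeger-Gromov-Hamilton convergence are routine. The genuine difficulty—ruling out a ``slow'' subsequence along which the curvature escapes to infinity after blow-up—has already been absorbed into Theorem \ref{mainthm1}, which is why the present reduction from $\Sigma_I$ to $\Sigma_R$ is so short.
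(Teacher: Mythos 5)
Your proposal is correct and follows essentially the same route as the paper: contradict (or take the contrapositive of) membership in $\Sigma_R$, rescale along $\lambda_j=(T-t_j)^{-1}$, invoke Theorem~\ref{mainthm1} to obtain a nontrivial gradient shrinking soliton limit, observe that $R_{g_\infty(-1)}(p_\infty)=0$ by the scaling identity, and then invoke the Pigola--Rimoldi--Setti rigidity (Lemma~\ref{rigidity}) to force the limit to be the Gaussian soliton, a contradiction. The only cosmetic difference is that the paper phrases it as a contrapositive ($p\notin\Sigma_R\Rightarrow p\notin\Sigma_I$) and cites Theorem~\ref{limsolthm} for existence of the soliton limit separately from Theorem~\ref{mainthm1} for nontriviality, but the mathematical content is identical.
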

\begin{proof}
By definition, we know that $\Sigma_R \subseteq \Sigma_I$. For the
converse inclusion, assume that $p\in \m\setminus\Sigma_R$. Hence
there are $c_j\searrow 0$ and $t_j\in[T-c_j,T)$, such that
$R_{g(t_j)}(p)< \frac{c_j}{T-t_j}$. Let
$\lambda_j=(T-t_j)^{-1}\rightarrow\infty$ and rescale as in Theorem
\ref{limsolthm}, i.e.\ let $g_j(t):=\lambda_j
g(T+\frac{t}{\lambda_j})$ on $\m \times [-\lambda_jT,0)$. By Theorem
\ref{limsolthm}, $(\m,g_j(t),p)$ converge to a gradient shrinking
soliton in canonical form $(\m_\infty,g_\infty(t),p_\infty)$ on
$(-\infty,0)$ with
\begin{equation*}
R_{g_\infty(-1)}(p_\infty)=\lim_{j\to\infty}\lambda_j^{-1}R_{g(t_j)}(p)
\leq \lim_{j\to\infty} c_j=0.
\end{equation*}
By Lemma \ref{rigidity}, $(\m_\infty,g_\infty(-1))=(\m_\infty,g_\infty(t))$
must be flat. But by Theorem \ref{mainthm1}, the limit soliton is nonflat for points
in the Type I singular set, so $p\in\m\setminus\Sigma_I$.
\end{proof}
To show that $\Sigma = \Sigma_I$, we prove the following regularity
type result.
\begin{thm}\label{regthm}
If $(\m^n,g(t))$ is a Type I Ricci flow on $[0,T)$ with singular
time $T$ and $p\in \m \setminus \Sigma_I$, then there exists a
neighbourhood $U_p \ni p$ such that the curvature is bounded
uniformly on $U_p \times [0,T)$. In particular, $p\in\m\setminus\Sigma$.
\end{thm}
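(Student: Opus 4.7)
The plan is to adapt the pseudolocality-based argument from the proof of Theorem \ref{mainthm1}: if $p\notin\Sigma_I$ we will extract a parabolic blow-up sequence at $p$ whose Cheeger-Gromov-Hamilton limit is flat, and then use Perelman's pseudolocality to transfer this almost-Euclidean geometry all the way forward to the singular time, obtaining a uniform curvature bound on a fixed neighbourhood.

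Concretely, since by Theorem \ref{thmRblowup} we have $p\notin\Sigma_I=\Sigma_R$, we can choose sequences $c_j\searrow 0$ and $t_j\in[T-c_j,T)$ with $R_{g(t_j)}(p)<c_j/(T-t_j)$, exactly as in the proof of Theorem \ref{thmRblowup}. Setting $\lambda_j:=1/(T-t_j)\to\infty$ and $g_j(t):=\lambda_j g(T+t/\lambda_j)$, Theorem \ref{limsolthm} produces, after passing to a subsequence, a pointed Cheeger-Gromov-Hamilton limit $(\m_\infty,g_\infty(t),p_\infty)$ on $(-\infty,0)$ that is a gradient shrinking soliton in canonical form. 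Since
\begin{equation*}
R_{g_\infty(-1)}(p_\infty)=\lim_{j\to\infty}\lambda_j^{-1}R_{g(t_j)}(p)\le\lim_{j\to\infty}c_j=0,
\end{equation*}
Lemma \ref{rigidity} forces $(\m_\infty,g_\infty(t))$ to be the static flat Euclidean space $(\R^n,g_{\R^n})$.

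The central step is then to apply Proposition \ref{pseudolocality} (together with Remark \ref{pseudoimprove}) to the rescaled flow $g_j$ based at rescaled time $-1$, with the specific choice of radius $r_0:=1/\ep$. This $r_0$ is dictated by the equality $(\ep r_0)^2=1$, so that the pseudolocality time window matches the remaining rescaled lifespan $[-1,0)$ exactly. Because $g_\infty$ is flat, $B_{g_\infty(-1)}(p_\infty,r_0)$ has vanishing curvature and volume exactly $\omega_n r_0^n$, so by Cheeger-Gromov-Hamilton convergence, for all $j$ sufficiently large the ball $B_{g_j(-1)}(p,r_0)$ will satisfy both hypotheses of Proposition \ref{pseudolocality}. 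Pseudolocality then gives $|\Rm_{g_j(t)}|_{g_j(t)}(x)\le(\ep r_0)^{-2}=1$ for all $x\in B_{g_j(-1)}(p,\ep r_0)$ and $t\in[-1,0)$. Fixing such a $j$ and undoing the parabolic rescaling produces the bound $|\Rm_{g(t)}|_{g(t)}\le 1/(T-t_j)$ on $U_p:=B_{g(t_j)}\bigl(p,\sqrt{T-t_j}\bigr)$ for all $t\in[t_j,T)$, which together with the Type I bound (\ref{typeIeq}) on the compact time interval $[0,t_j]$ yields a uniform curvature bound on $U_p\times[0,T)$, so $p\notin\Sigma$.

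The main technical point is precisely the scale-matching just described: the parabolic factor $\lambda_j=1/(T-t_j)$ is exactly what makes the pseudolocality window $(\ep r_0)^2=1$ translate back to the full remaining interval $[t_j,T)$ in the original flow, so that the initial estimate propagates all the way up to the singular time. The verification of the pseudolocality hypotheses on $B_{g_j(-1)}(p,r_0)$ itself relies on Cheeger-Gromov-Hamilton convergence to the flat limit, which in turn hinges on the rigidity Lemma \ref{rigidity} forbidding any non-flat shrinking soliton with $R(p_\infty)=0$.
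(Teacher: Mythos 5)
Your proof is correct and follows essentially the same strategy as the paper's: use the equivalence $\Sigma_I=\Sigma_R$ to produce a flat blow-up limit at $p$, then apply Perelman's pseudolocality (via Proposition \ref{pseudolocality} and Remark \ref{pseudoimprove}) to transfer an almost-Euclidean ball at a late rescaled time into a curvature bound on a fixed neighbourhood all the way up to $T$, and finish with the Type~I bound on $[0,t_j]$. The only cosmetic difference is your scale-matching $r_0=1/\ep$ with base time $-1$ (so the pseudolocality window is $[-1,0)$), whereas the paper takes $r_0=1$ and base time $-\ep^2$; both unscale to the same kind of uniform bound on a small ball.
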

\begin{proof}
Since $p\in\m\setminus\Sigma_I$, for any given $\lambda_j\rightarrow
\infty$ the rescaled metrics $g_j(t)=\lambda_j
g(T+\frac{t}{\lambda_j})$ converge to flat Euclidean space. As in
the proof of Theorem \ref{mainthm1}, for large enough $j\ge j_0$ the
conditions of the pseudolocality theorem, Proposition
\ref{pseudolocality}, and Lemma \ref{distlemma} with $r_0=1$ are
satisfied. Let $K:=\lambda_{j_0}$ and take $\ep>0$ to be as in the
pseudolocality theorem and Remark \ref{pseudoimprove} following it.
Then we conclude for $j=j_0$ as before
\begin{equation*}
|\Rm_{g_{j_0}(t)}|_{g_{j_0}(t)}(x)\leq \ep^{-2} \quad \text{for }
-\ep^2\leq t < 0, x\in B_{g_{j_0}(-\ep^2)}(p,\ep),
\end{equation*}
which is equivalent to
\begin{equation*}
|\Rm_{g(t)}|_{g(t)}\leq \frac{K}{\ep^2} \quad \text{for all } t \in
[T-\tfrac{\ep^2}{K},T)
\end{equation*}
on the neighbourhood $U_p:= B_{g(T-\frac{\ep^2}{K})}
(p,\frac{\ep}{\sqrt{K}})$ of $p$. The bound for times
$t<T-\frac{\ep^2}{K}$ follows trivially from the Type I condition
(\ref{typeIeq}).
\end{proof}
Combining Theorem \ref{thmRblowup} and Theorem \ref{regthm}, we have
proved Theorem \ref{mainthm2}. As a corollary, we obtain Theorem
\ref{mainthm3}, i.e. that the singular set $\Sigma$ has
asymptotically vanishing volume if
$\mathrm{Vol}_{g(0)}(\Sigma)<\infty$.
\begin{proof}[Proof of Theorem \ref{mainthm3}.]
By the bounded curvature assumption (\ref{typeIeq}) together with
the parabolic maximum principle applied to the evolution of
$R_{g(t)}$, there exists $\tilde{C}>0$ such that $\inf_\m
R_{g(t)}\geq -\tilde{C}$, $\forall t\in[0,T)$. Let $\Sigma_{R,k}$ be
defined by
\begin{equation*}
\Sigma_{R,k}:=\{p\in\m\,|\, R_{g(t)}(p)\ge\tfrac{1/k}{T-t},\,\forall
t\in(T-\tfrac{1}{k},T)\}\subseteq \Sigma_R = \Sigma
\end{equation*}
for $k\in\mathbb{N}$ and $\Sigma_{R,0}:=\emptyset$. We claim that on
$\Sigma_{R,k}$, we have for all $t\in[0,T)$
\begin{equation*}
\int_0^t R_{g(s)}ds \ge -\tilde{C}T +
\log\Big(\tfrac{1/k}{T-t}\Big)^{1/k}.
\end{equation*}
For $t\le T-\frac{1}{k}$, this follows from $\int_0^t Rds\ge
-\tilde{C}t\ge -\tilde{C}T$ and the fact that the $\log$-term is
nonpositive in this case. For $t\in(T-\frac{1}{k},T)$, we obtain by
definition of $\Sigma_{R,k}$
\begin{align*}
\int_0^t R_{g(s)}ds &= \int_0^{T-\frac{1}{k}} R_{g(s)}ds +
\int_{T-\frac{1}{k}}^t R_{g(s)}ds\\
&\ge -\tilde{C}(T-\tfrac{1}{k})+\int_{T-\frac{1}{k}}^t
\tfrac{1/k}{T-s}ds\\
&\ge -\tilde{C}T + \log\Big(\tfrac{1/k}{T-t}\Big)^{1/k}.
\end{align*}
Using $k^{1/k} \le 2$ for all $k\in\mathbb{N}$, we can now bound
volumes of subsets of $\Sigma_{R,k}$ at time $t$ in terms of their
volumes at time $0$ by computing
\begin{align*}
  \mathrm{Vol}_{g(t)}(\Sigma_{R,k}\setminus \Sigma_{R,k-1})&= \int_{\Sigma_{R,k}\setminus \Sigma_{R,k-1}} e^{-(\int_0^t
R_{g(s)}ds)}dvol_{g(0)} \\
   &\leq 2e^{\tilde{C}T}(T-t)^{1/k}\,
\mathrm{Vol}_{g(0)}(\Sigma_{R,k}\setminus \Sigma_{R,k-1}).
\end{align*}
We use this last estimate to conclude
\begin{align*}
\limsup_{t\to T}\mathrm{Vol}_{g(t)}(\Sigma) &= \limsup_{t\to
T}\sum_{k\in\mathbb{N}}\mathrm{Vol}_{g(t)}(\Sigma_{R,k}\setminus
\Sigma_{R,k-1})\\
&\le 2e^{\tilde{C}T} \lim_{t\to T}
\sum_{k\in\mathbb{N}}(T-t)^{1/k}\,\mathrm{Vol}_{g(0)}(\Sigma_{R,k}\setminus\Sigma_{R,k-1})\\
&=0,
\end{align*}
where the last line follows easily from the fact that
$\sum_{k\in\mathbb{N}}\mathrm{Vol}_{g(0)}(\Sigma_{R,k}\setminus\Sigma_{R,k-1})=\mathrm{Vol}_{g(0)}(\Sigma_R)<\infty.$
\end{proof}
\section{Density and regularity theorem}
\label{density}
In this section, we use the reduced volume based at the singular
time as reviewed in Section \ref{blowup} to define a density
function on the closure of space-time of Type I Ricci flows and
prove a regularity theorem. We first overcome the non-uniqueness
issue of the reduced distance based at the singular time. The functions
$l_{p,T}$ used in Section \ref{blowup} were subsequential limits and
depended on the choice of $\{t_k\}$ and a subsequence $\{t_{k_l}\}.$
We now denote such a choice of reduced distance based at the
singular time by $l_{p,T,\{t_{k_l}\}}$ to make the following
definition.
\begin{defn}\label{infdef}
Under the assumptions of Lemma \ref{lsing}, we define \textbf{the
reduced distance based at the singular time} by
\begin{equation*}
l_{p,T}:=\inf_{\{t_{k_l}\}}l_{p,T,\{t_{k_l}\}},
\end{equation*}
where the infimum is taken over all possible subsequences of all
possible sequences $t_k\nearrow T$ used to construct a reduced
distance. As in Definition \ref{deflvsing}, we correspondingly
denote \textbf{the reduced volume density} and \textbf{the reduced
volume based at the singular time} by $v_{p,T}$ and
$\tilde{V}_{p,T},$ respectively.
\end{defn}
Note that Lemma \ref{lest} implies that $l_{p,T}$ is well-defined
and locally Lipschitz because $l_{p,T,\{t_{k_l}\}}$ are uniformly
locally Lipschitz. The monotonicity in Lemma \ref{vsing} for the
redefined $\tilde{V}_{p,T}$ as above holds once we show the
following lemma.
\begin{lemma} Under the assumptions as in Lemma \ref{lsing}, we have
for $v_{p,T}$ in Definition \ref{infdef} that
\begin{equation*}
\square^*_{g(\bar{t})}v_{p,T}(q,\bar{t})\le 0
\end{equation*}
holds in the weak sense or sense of distributions.
\end{lemma}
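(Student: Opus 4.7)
The plan is to reduce the claim to two classical facts: that the infimum defining $l_{p,T}$ in Definition \ref{infdef} can be realised by a countable family, and that the pointwise maximum of a monotone increasing sequence of distributional subsolutions of the linear parabolic operator $\square^*_{g(\bar{t})}$ is again a distributional subsolution. Since $v=(4\pi(T-\bar{t}))^{-n/2}e^{-l}$ is strictly decreasing in $l$, taking the infimum in $l$ is the same as taking the supremum in $v$:
\begin{equation*}
v_{p,T}(q,\bar{t}) = \sup_{\{t_{k_l}\}}v_{p,T,\{t_{k_l}\}}(q,\bar{t}),
\end{equation*}
and by Lemma \ref{lsing} each $v_{p,T,\{t_{k_l}\}}$ is a distributional subsolution of $\square^*_{g(\bar{t})}$.

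First I would reduce this (potentially uncountable) supremum to a countable one. By Lemma \ref{lest} the family $\{l_{p,T,\{t_{k_l}\}}\}$ is uniformly locally bounded and uniformly locally Lipschitz on $\m\times(0,T)$, so it is precompact in $C^0_{\mathrm{loc}}$ by Arzel\`a--Ascoli, and any such precompact set on a $\sigma$-compact domain is separable. A standard extraction over a compact exhaustion then produces a countable subfamily $\{\alpha_i\}_{i\in\N}$ with $v_{p,T}=\sup_i v_{p,T,\alpha_i}$ pointwise on $\m\times(0,T)$.

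Setting $V_N:=\max_{1\le i\le N}v_{p,T,\alpha_i}$, I would show by induction on $N$ that $\square^*_{g(\bar{t})}V_N\le 0$ in the distributional sense. The inductive step is the classical principle that the maximum of two distributional subsolutions of a linear second-order parabolic operator is itself a distributional subsolution. One proves this by replacing $\max$ with the smooth convex regularisation
\begin{equation*}
M_\delta(a,b) := \tfrac{1}{2}\bigl(a+b+\sqrt{(a-b)^2+\delta^2}\bigr),
\end{equation*}
spatially mollifying the locally Lipschitz densities $v_{p,T,\alpha_i}$ so the chain rule applies classically; convexity of $M_\delta$ forces the quadratic-in-gradient corrections from $\Delta M_\delta$ to contribute with the correct (nonpositive) sign to $\square^* M_\delta$, while the identity $\partial_a M_\delta+\partial_b M_\delta=1$ handles the zeroth-order term $R\,\cdot$. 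Sending $\delta\to 0$ and removing the mollification yields the claim for $N=2$, and the induction is immediate.

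Finally, $V_N$ is monotone increasing and converges pointwise to $v_{p,T}$; because the $l_{p,T,\alpha_i}$ share common local Lipschitz constants from Lemma \ref{lest}, the family $\{V_N\}$ is uniformly locally equicontinuous, and Dini's theorem upgrades the convergence to locally uniform on compact subsets of $\m\times(0,T)$. Passing $V_N\to v_{p,T}$ through the (linear) distributional pairing against any nonnegative test function $\phi\in C_c^\infty(\m\times(0,T))$ then yields $\square^*_{g(\bar{t})}v_{p,T}\le 0$ in the sense of distributions. The technical core of the argument is the max-of-subsolutions step: one must handle the backward sign in $\square^*=-\partial_t-\Delta_{g(\bar{t})}+R_{g(\bar{t})}$ and the $\bar{t}$-dependence of $\Delta_{g(\bar{t})}$ and $R_{g(\bar{t})}$ with some care, but these are routine continuity facts under the Ricci flow on compact subsets of $(0,T)$.
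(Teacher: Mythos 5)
Your approach is genuinely different from the paper's and, in outline, valid. The paper argues by contradiction: it supposes $\square^*v_{p,T}>0$ (weakly) on some small parabolic cylinder $P$ with parabolic boundary $\Gamma$, introduces caloric barriers $w_{\{t_{k_l}\}}$ and $w$ solving $\square^* = 0$ in $P$ with boundary data $v_{p,T,\{t_{k_l}\}}|_\Gamma$ and $v_{p,T}|_\Gamma$ respectively, and then invokes Friedman's strong maximum principle for weakly subparabolic functions three times: first to get $v_{p,T,\{t_{k_l}\}}\le w_{\{t_{k_l}\}}$, then $w_{\{t_{k_l}\}}\le w$, and finally (after taking sup over $\{t_{k_l}\}$) to contradict the strict inequality $\square^*(v_{p,T}-w)>0$. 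You instead prove the subsolution property directly: reduce the a priori uncountable supremum $v_{p,T}=\sup v_{p,T,\{t_{k_l}\}}$ to a countable one via the uniform local Lipschitz bounds from Lemma \ref{lest} and Arzel\`a--Ascoli; establish by induction that each finite maximum $V_N$ is a distributional subsolution (max-of-two-subsolutions); and pass to the limit $V_N\nearrow v_{p,T}$ in the distributional pairing using local uniform convergence. Both proofs lean on the same input (Lemma \ref{lsing} plus the uniform local Lipschitz estimates of Lemma \ref{lest}), but the paper's route outsources the hard analysis to a single quoted theorem of Friedman, whereas your route outsources it to the classical ``max of distributional subsolutions is a subsolution'' principle.

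The one place where your sketch is thinner than it should be is precisely that max-of-subsolutions step. As stated, mollification does not commute with the variable-coefficient operator $\square^*_{g(\bar t)}=-\partial_{\bar t}-\Delta_{g(\bar t)}+R_{g(\bar t)}$: a spatial (or space-time) mollification of a distributional subsolution is no longer an exact subsolution, and you would need to track the resulting commutator errors, which vanish only as the mollification parameter tends to zero and only after localizing to compact space-time regions where the metric coefficients are controlled. Your remark about $\partial_a M_\delta+\partial_b M_\delta=1$ handling the zeroth-order term $R$ is also slightly off the mark: $M_\delta$ is not $1$-homogeneous, so $(\partial_a M_\delta)u_1+(\partial_b M_\delta)u_2\ne M_\delta(u_1,u_2)$ for $\delta>0$, and the $R$-term produces an $O(\delta)$ error that happens to vanish in the limit but is not ``handled by an identity.'' In fact no sign condition on $R$ is needed at all: the clean argument is the Stampacchia/Kato-type one, writing $\max(u_1,u_2)=u_1+(u_2-u_1)^+$, using that the $v_{p,T,\{t_{k_l}\}}$ are locally Lipschitz (hence in $W^{1,2}_{\mathrm{loc}}$) so $(u_2-u_1)^+$ has the expected weak gradient, and verifying the distributional inequality by testing and splitting along $\{u_1\gtrless u_2\}$; the interface contributes a boundary term with the favourable sign. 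If you fill in that step this way (or carefully carry out the $M_\delta$ plus mollification argument with the error bookkeeping), your proof closes.
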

\begin{proof}
We argue by contradiction: Assume there exists a (small) parabolic
cylinder $P=U\times[t_2,t_1)\subset \m\times (0,T),$ $U$ open, such
that for all $0\le\phi\in C^2_{cpt}(\m\times(0,T))$ with support in
$P$
\begin{equation*}
\iint_{P} v_{p,T}(q,t) \square_{g(t)}\phi(q,t)\, dvol_{g(t)}(q) dt> 0.
\end{equation*}
Inverting time ($\tau:=T-t$) implies that $-v_{p,T}$ is strictly
subparabolic in the weak sense of Friedman \cite{Friedman61}, and we
will apply his (strong) maximum principle several times to derive a
contradiction.

By Definition \ref{infdef},
$v_{p,T}:=\sup_{\{t_{k_l}\}}v_{p,T,\{t_{k_l}\}}.$ Let $\{t_{k_l}\}$
be any such subsequence, then $v_{p,T,\{t_{k_l}\}}\le v_{p,T},$ and
we know from Lemma \ref{lsing} that
\begin{equation*}
\square^*v_{p,T,\{t_{k_l}\}}\le 0
\end{equation*}
in the weak sense. Now let $\Gamma:=\bar{P}\backslash P$ and
$w_{\{t_{k_l}\}}$ be a weak solution to
\begin{equation*}
\left\{
\begin{array}{rcl}
  \square^* w_{\{t_{k_l}\}}&=& 0 \,\,\,\text{ in P }\\
  w_{\{t_{k_l}\}}|_{\Gamma} &=& v_{p,T,\{t_{k_l}\}}|_{\Gamma}.
\end{array}\right.
\end{equation*}
Hence,
\begin{equation*}
\square^* (v_{p,T,\{t_{k_l}\}}-w_{\{t_{k_l}\}})\le 0,
\end{equation*}
and the maximum principle implies
\begin{equation}\label{vpwp}
v_{p,T,\{t_{k_l}\}} \le w_{\{t_{k_l}\}} \text{ in }\bar{P}.
\end{equation}
Similarly, let $w$ be a weak solution to
\begin{equation*}
\left\{
\begin{array}{rcl}
  \square^* w&=& 0 \,\,\,\text{ in P }\\
  w|_{\Gamma} &=&v_{p,T}|_{\Gamma}.
\end{array}\right.
\end{equation*}
Since
\begin{equation*}
\square^* (w_{\{t_{k_l}\}}-w)=0
\end{equation*}
and $w_{\{t_{k_l}\}}|_\Gamma=v_{p,T,\{t_{k_l}\}}|_\Gamma\le
v_{p,T}|_\Gamma=w|_\Gamma,$ the maximum principle implies that
\begin{equation}\label{wpws}
w_{\{t_{k_l}\}} \le w \text{ in }\bar{P}.
\end{equation}
As $\{t_{k_l}\}$ was arbitrary, we conclude from (\ref{vpwp}) and
(\ref{wpws}) that
\begin{equation*}
v_{p,T} \le w \text{ in }\bar{P}.
\end{equation*}
Using $v_{p,T}|_\Gamma=w|_\Gamma$ and the maximum principle again,
this contradicts that by assumption
\begin{equation*}
\square^*(v_{p,T}-w)>0
\end{equation*}
in the weak sense in $P.$
\end{proof}
We now consider points in the closure of space-time, i.e. in
$\m\times [0,T],$ to include the singular time.
\begin{defn}
Let $(\m,g(t))$ be a Type I Ricci flow on $[0,T).$ For any
$(p,t_0)\in \m\times [0,T]$ we define \textbf{the density at
$(p,t_0)$ in the Ricci flow $(\m,g(t))$} by
\begin{equation*}
\theta_{p,t_0}:=\lim_{\bar{t}\nearrow t_0}
\tilde{V}_{p,t_0}(\bar{t})\in(0,1].
\end{equation*}
\end{defn}
Note that for the special case of gradient shrinking solitons,
Cao-Hamilton-Ilmanen \cite{CaoHamiltonIlmanen} suggest a ``central
density of a shrinker'' defined similarly.

If $T$ is the Type I singular time and $t_0<T$ it follows from the
properties of Perelman's reduced volume that $\theta_{p,t_0}=1$ for
any $p\in \m$ (in fact, without the Type I assumption). At the
singular time $t_0=T,$ the density carries information regarding the
structure of the singularity, namely the corresponding gradient
shrinking solitons one may obtain by taking a blow-up limit. We
prove the following regularity type result similar to White's local
regularity result for mean curvature flow \cite{White05}, where
instead of using the Gaussian density for the mean curvature flow we
use the density for Type I Ricci flows as defined above. A related
result is proved by Ni \cite{Ni} using a localized quantity.
\begin{thm}\label{gapthm}
Let $(\m^n,g(t))$ be a Type I Ricci flow on $[0,T)$ with singular
time $T$ and singular set $\Sigma$ as in Definition \ref{singpoint}.
Then $\theta_{p,T}=1$ if and only if $p\in \m \setminus \Sigma$. In
fact, there exists $\eta>0$ (only depending on $n$) such that if
$\theta_{p,T}>1-\eta$ for a Ricci flow as above, then $p\in \m
\setminus \Sigma$. Equivalently, if $\theta_{p,T}>1-\eta$, then
there exists a neighbourhood $U_p \ni p$ such that the curvature is
bounded uniformly on $U_p \times [0,T)$.
\end{thm}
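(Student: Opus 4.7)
The plan is to reduce Theorem \ref{gapthm} to Yokota's gap theorem \cite{Yokota}, which supplies a dimensional constant $\eta > 0$ such that every nontrivial normalized gradient shrinking soliton has central density at most $1 - \eta$, combined with the blow-up characterization of $\Sigma$ from Theorems \ref{mainthm1} and \ref{mainthm2}.

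The main technical step is to identify $\theta_{p,T}$ with the central densities of blow-up shrinkers at $(p,T)$. Fix $\lambda_j \to \infty$ and set $g_j(t) = \lambda_j g(T + t/\lambda_j)$. For any subsequence $\{t_{k_l}\}$ underlying a reduced distance $l_{p,T,\{t_{k_l}\}}$ as in Lemma \ref{lsing}, the scaling behaviour of the reduced volume gives $\tilde{V}^j_{p,0}(\bar{t}) = \tilde{V}_{p,T,\{t_{k_l}\}}(T + \bar{t}/\lambda_j)$ for large $j$. Sending $j \to \infty$ at fixed $\bar{t} < 0$, Cheeger-Gromov-Hamilton convergence together with Lemma \ref{vsing} identifies this common limit as both the (constant) central density $\theta_\infty$ of the blow-up shrinker constructed in Theorem \ref{limsolthm} and $\lim_{s \nearrow T} \tilde{V}_{p,T,\{t_{k_l}\}}(s)$. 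The uniform local Lipschitz and Gaussian-type decay bounds of Lemma \ref{lest} then allow a diagonal argument combined with dominated convergence to lift this subsequential identification to a relation between $\theta_{p,T}$ (defined through the pointwise infimum in Definition \ref{infdef}) and the supremum of the central densities $\theta_\infty$ over all choices of subsequence.

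Given the matching, the conclusion is immediate. Suppose $p \in \Sigma$. By Theorem \ref{mainthm2}, $p \in \Sigma_I$, and by Theorem \ref{mainthm1} every blow-up is a nontrivial normalized gradient shrinking soliton in canonical form. Yokota's theorem gives $\theta_\infty \leq 1 - \eta$ for each such blow-up, so the matching yields $\theta_{p,T} \leq 1 - \eta$. Contrapositively, $\theta_{p,T} > 1 - \eta$ forces $p \notin \Sigma$, and Theorem \ref{regthm} then furnishes the neighbourhood $U_p \ni p$ on which the curvature stays uniformly bounded on $U_p \times [0,T)$. Conversely, for $p \notin \Sigma$ the rescaled flows $g_j$ converge to flat Euclidean space (exactly as in the proof of Theorem \ref{regthm}), and since the Gaussian soliton has central density $1$, the same matching gives $\theta_{p,T} = 1$.

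The main obstacle is the matching step itself. The infimum in Definition \ref{infdef} runs pointwise over a potentially uncountable family of subsequential reduced distances, while the blow-up procedure of Theorem \ref{limsolthm} only picks up one such subsequence at a time, so rigorously equating $\theta_{p,T}$ with a supremum of blow-up densities is not automatic. Exchanging the monotone limit $\bar{t} \nearrow T$ with the pointwise infimum over subsequences requires the uniform bounds of Lemma \ref{lest}, in particular the Gaussian-type estimate $l_{p,T}(q,\bar{t}) \geq \tfrac{1}{K}(1 + d_{\bar{t}}(p,q)/\sqrt{T - \bar{t}})^2 - K$, which provides the integrable dominating function needed to push the convergence through. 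Once this identification is in place, Yokota's gap theorem delivers the statement with essentially no further work.
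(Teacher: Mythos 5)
Your outline follows the same high-level strategy as the paper (blow-up to a shrinker, compare its central density to $\theta_{p,T}$, then invoke Yokota's gap theorem plus Theorems \ref{mainthm1} and \ref{mainthm2}), and you correctly identify the technical sticking point: the infimum in Definition \ref{infdef} ranges over a large family of subsequential reduced distances, while the blow-up of Theorem \ref{limsolthm} a priori only sees one of them.

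However, your proposed resolution --- equate $\theta_{p,T}$ with a \emph{supremum} of blow-up densities $\theta_\infty$ over all choices of subsequence, using Lemma \ref{lest} for dominated convergence and a diagonal argument --- does not close the gap and in fact heads in the wrong direction. Since $v_{p,T}=\sup_{\{t_{k_l}\}} v_{p,T,\{t_{k_l}\}}$, one only gets $\int v_{p,T}\ge \sup_{\{t_{k_l}\}}\int v_{p,T,\{t_{k_l}\}}$; there is no reason for the integral of the pointwise supremum to equal the supremum of the integrals, so you would at best show $\theta_{p,T}\geq\sup\theta_\infty$, which is the useless direction --- you need a \emph{single} blow-up shrinker whose central density equals (or bounds above) $\theta_{p,T}$ in order to feed it into Yokota's theorem. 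Dominated convergence in $\bar t$ does not fix this, as the problem is the interchange of $\int$ with $\sup_{\{t_{k_l}\}}$, not with the time limit.

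The paper avoids this entirely by a different mechanism: it shows (in the unnumbered lemma preceding Definition \ref{infdef}) that the \emph{infimum} $l_{p,T}$ itself satisfies $\square^* v_{p,T}\le 0$ in the weak sense, via Friedman's strong maximum principle for weakly subparabolic functions. This, together with the Lemma \ref{lest} bounds (which $l_{p,T}$ inherits as a uniform limit), means the entire construction of Theorem \ref{limsolthm} can be run with $l_{p,T}$ in place of a single $l_{p,T,\{t_{k_l}\}}$. The resulting blow-up limit is then a normalized gradient shrinker in canonical form whose constant (formal) reduced volume is exactly $\theta_{p,T}$, with no supremum to manage. That is the missing ingredient; without it your "matching step" remains a genuine gap.
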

\begin{proof}
It follows from the discussion in Section \ref{singsize} that any
rescaling limit $(\m_\infty,g_\infty(t))$ as in Theorem
\ref{limsolthm} around $p\in\m \setminus\Sigma$ is flat, i.e.
$(\m_\infty,g_{\infty}(t))$ is isometric to the Gaussian soliton
$(\R^n,g_{\R^n}).$ Hence one easily computes $\theta_{p,T}=1.$

Conversely, let $\theta_{p,T}$ be the density of $p\in \m$ and let
$(\m_\infty,g_\infty(t),l^\infty(t))$ be the rescaling limit flow
around $(p,T).$ It is a normalized gradient shrinking soliton in
canonical form with constant formal reduced volume
\begin{equation*}
\theta_{p,T}=\int_{\m_\infty} (4\pi(T-t))^{-\frac{n}{2}}
e^{-l^\infty(t)} dvol_{g_\infty(t)}
\end{equation*}
for any $t.$ Note that in the proof of Theorem \ref{limsolthm} we
can use $l_{p,T}$ instead of $l_{p,T,\{t_{k_l}\}}$ as it only
requires the estimates from Lemma \ref{lest} as well as the formal equalities
on a constant reduced volume. Now we can employ \cite[Corollary 1.1
(3)]{Yokota} to conclude that there exists $\eta>0$ (only depending
on $n$) such that if $\theta_{p,T}>1-\eta$, then the limit flow is
the Gaussian soliton. In particular, $\theta_{p,T}>1-\eta$ implies
$p\in \m \setminus \Sigma$ by Theorems \ref{mainthm1} and \ref{mainthm2}.
\end{proof}

\makeatletter
\def\@listi{%
  \itemsep=0pt
  \parsep=1pt
  \topsep=1pt}
\makeatother
\textheight=230mm
{\fontsize{10}{11}\selectfont
}

Joerg Enders\\
{\sc Institut f\"ur Mathematik, Universit\"at Potsdam, 14469 Potsdam, Germany}

Reto M\"uller\\
{\sc Scuola Normale Superiore di Pisa, 56126 Pisa, Italy}

Peter M.\ Topping\\
{\sc Mathematics Institute, University of Warwick, Coventry,
CV4 7AL, UK}

\end{document}